 % !BIB program = biber

\documentclass[12pt,oneside,reqno]{amsart}
\usepackage[margin=1in]{geometry}
%\setlength{\textwidth}{14cm}
 % Keywords command
\usepackage[utf8]{inputenc}
%\usepackage[backend=bibtex,style=alphabetic]{biblatex}
%\addbibresource{mybibliography.bib}
%\usepackage[english]{babel}

\usepackage[backend=biber,style=numeric]{biblatex} \addbibresource{mybibliography.bib}

  \usepackage[english]{babel}
\usepackage{csquotes}

\usepackage[font=small, labelfont=bf, labelsep=colon]{caption}

 \usepackage[font=small]{caption}
   \usepackage[scr=boondox%, 
   % heavily sloped
      %      cal=esstix
            ]   % slightly sloped
           {mathalpha}

\usepackage[hidelinks]{hyperref}
 
\usepackage{amsmath, amsthm, amssymb,latexsym,epsfig,amsthm,enumerate,multicol,wasysym}
\usepackage{color}
\usepackage{graphicx}
\usepackage{nccrules}
\usepackage{xfrac}
\usepackage{hyperref}
\usepackage{textcomp}
\usepackage{tikz}
\usepackage{xcolor}
\usepackage{etoolbox}
\usepackage{comment}

\usepackage{soul}
\numberwithin{equation}{section}
\theoremstyle{plain}
\newtheorem{theorem}{Theorem}[section]
\newtheorem{lemma}[theorem]{Lemma}

\newtheorem{proposition}[theorem]{Proposition}

\theoremstyle{definition}
\newtheorem{definition}[theorem]{Definition}

\newtheorem{example}[theorem]{Example}

\theoremstyle{remark}

\newtheorem{case[theorem]}{Case}

\numberwithin{equation}{section}

%    Absolute value notation

%    Hausdorff dimension notation

%    Euclidean space notation

%    Blank box placeholder for figures (to avoid requiring any
%    particular graphics capabilities for printing this document).

\usepackage[normalem]{ulem}

\newcommand{\cI}{\mathcal I}

\author{Luis Gomez, Jonathan Jaimangal,  Azita Mayeli, Tasfia Proma}
\address{Department of Mathematics, CUNY Hunter College,  New York City, NY}
\email{luis.gomezreyes43@myhunter.cuny.edu}
\address{Department of Mathematics, CUNY Hunter College,  New York City, NY}
 \email{jonathan.jaimangal07@myhunter.cuny.edu}
\address{Department of Mathematics, CUNY Graduate Center, New York City, NY}
\email{amayeli@gc.cuny.edu } 
\address{Department of Mathematics, CUNY Hunter College,  New York City, NY}
\email{tasfia.ahmed67@myhunter.cuny.edu}

\thanks{A.~Mayeli was supported in part by AMS-Simons Research Enhancement Grant and the PSC-CUNY research grant 67807-00 55.}

\title{Eigenvalue distribution analysis of  multidimensional Prolate Matrices}

\begin{document}
  
\begin{abstract}
We extend classical time–frequency limiting analysis, historically applied to one dimensional finite signals, to the multidimensional (or multi-indexed) discrete setting. This extension is relevant for images, videos, and other multi‐dimensional signals, as it enables a rigorous study of joint time–frequency localization in higher dimensions.
To achieve this, we define multi‐dimensional time‐limiting and frequency‐limiting matrices tailored to signals on a Cartesian grid, then construct a multi-indexed prolate matrix. We prove that the spectrum of this matrix exhibits an eigenvalue concentration phenomenon: the bulk of eigenvalues cluster near 1 or 0, with a narrow transition band separating these regions. Moreover, we derive quantitative bounds on the width of the transition band in terms of time–bandwidth product  and prescribed accuracy.

Concretely, our contributions are twofold: (i) we extend Theorem 1.4 of \cite{israel2024eigenvalue} to the Cartesian discrete setting for higher dimensional signals; and (ii) within this framework,  we develop a multidimensional generalization of the non-asymptotic eigenvalue-distribution analysis for prolate matrices from \cite{karnik2019fast}.   The advances are summarized in Theorem \ref{main-theorem}. 
We test our theoretical results  through  numerical experiments in one‐ and two‐dimensional settings. The empirical results confirm the predicted eigenvalue concentration and illustrate potential applications in fast computation for image analysis, multi‐dimensional spectral estimation, and related signal‐processing tasks.  
 \end{abstract}
   \maketitle
\setcounter{tocdepth}{1}

\vspace*{-1em}  
\tableofcontents
\section{Introduction and main result}

\subsection{Motivation and Background}
\label{ssec:motivation}
In many signal processing and information-theoretic applications, one is often interested in signals that are simultaneously \emph{time-limited} and \emph{bandlimited}. However, the uncertainty principle imposes a fundamental limit: no nonzero function can be both perfectly time-limited and perfectly bandlimited. This trade-off leads to the study of signals that are \emph{optimally concentrated} in both domains.

In the one-dimensional discrete setting, consider signals   $x\in \mathbb C^N$ defined on a  discrete interval (or finite grid) \( [0,N]=\{0,\cdots, N-1\}\). Let \( \mathcal{I}_T \subset [0,N] \) denote a fixed time interval of length $M$ (e.g., \( \{0, \ldots, M-1\} \)), and let \( \mathcal{I}_B \subset [0,N]\) denote a frequency band of length $2K+1$ (e.g., low-frequency indices \( \{-K, \ldots, K\} \) modulo \( N \)). We say a signal  $x\in \mathbb C^N$ is bandlimited  with frequency band of size  $2K+1$ if its Fourier coefficients are zero for indices outside $ \mathcal{I}_B$. 

A natural question arises:

\begin{quote}
\emph{Among all signals bandlimited to \(\mathcal{I}_B\), which ones maximizes the concentration}
\[
\frac{\sum_{i \in \mathcal{I}_T} |x_i|^2}{\sum_{i \in [0,N]} |x_i|^2}\,?
\]

\end{quote}

This question is known as {\it the concentration problem},  and it  seeks bandlimited signals that are maximally concentrated within a fixed time interval.  In a seminal series of papers, Slepian, Landau, Pollak, and Widom \cite{Bell1,Bell2,Bell3,LandauWidom80}  studied the continuous version of this problem extensively, establishing the foundational theory of prolate spheroidal wave functions (PSWFs) as the solutions to this time-frequency concentration problem. These functions possess remarkable spectral concentration properties and have the double orthogonality property, i.e., they form an orthogonal basis on both the real line and finite intervals. Later contributions by Daubechies and Widom extended this theory, offering further insights into the spectral behavior and asymptotics of the associated integral operators \cite{daubechies88,widom2006asymptotic}. In the discrete-time setting, Slepian also addressed the analogous problem, deriving discrete prolate spheroidal sequences. These sequences are the eigenfunctions of a compact, Hermitian operator, and they form an orthonormal basis of optimally concentrated functions in both space and frequency \cite{slepian1978-V}.

Collectively, this body of work forms a framework in the theory of time-frequency localization, with wide-ranging applications, such as  applications in scientific imaging problems, e.g., in cryoelectron microscopy (cryo-EM) and MRI,  signal compression in MRI  \cite{Yang02}, clustering and principal component analysis for cryo-EM imaging data \cite{Shkolnisky17A, Shkolnisky17B} and also for the analysis of the alignment problem in cryo-EM \cite{lederman2017numerical}; see also \cite{LedermanSinger17, LedermanSinger20}.

\medskip

Let  $[0,N]=\{0, 1, \cdots, N-1\}$. Let    \( \mathcal{I}_T, \mathcal{I}_B \subset [0,N]\), defined as above,      denote the time and frequency support sets, and let \( T_{\mathcal{I}_T} \) and \( B_{\mathcal{I}_B} \) denote the corresponding time- and frequency-limiting operators (see Section \ref{limiting-operators} for the definitions). These operators are projections onto the spaces of vectors with time support in $\cI_T$  and frequency support in $\cI_B$, respectively. 
   Using these projection operators, the concentration problem can be written as the following optimization:
\begin{align}\label{optimization-problem}
\max_{x\in\mathbb{C}^{N}}\;&\|T_{\mathcal{I}_T}x\|^{2}\\[4pt]
\text{subject to}\;& B_{\mathcal{I}_B}x = x, \qquad \|x\|^{2}=1.
\end{align}

This optimization problem 
  reduces to the study of spectral analysis of the Hermitian matrix  
\[
A = T_{\mathcal{I}_T} B_{\mathcal{I}_B} T_{\mathcal{I}_T}.
\]
 Indeed,  
let $x\in \mathbb C^N$ be a unit vector,  $\|x\|=1$,  such that $B_{\cI_B}(x)= x$. Then $$\|T_{\mathcal{I}_T}x\|_{\ell^2(\mathbb C^N)}^{2} = \langle  T_{\mathcal{I}_T} B_{\mathcal{I}_B} T_{\mathcal{I}_T} x, x\rangle.$$

% \footnote{ {\color{red} use this later in the eigenvalue section} Find the eigenvectors of the matrix $A$ in the space $L^2(M)$. The vectors are in $\mathbb C^M$. Let us call them $w_k$. Then we prove that $\lambda_k= \cfrac{\|\hat w_k\|_{L^2(K)}^2}{\|\hat w_k\|_{\mathbb  C^N}^2}$ 
%  }

The composition $T_{\mathcal{I}_T} B_{\mathcal{I}_B} T_{\mathcal{I}_T}$ defines  a positive semi-definite self-adjoint operator. 
 The eigenvectors of \( A \) are known as the discrete prolate spheroidal sequences (DPSS), and they are bandlimited and most concentrated within the time interval $\cI_T$. Therefore, 
 they constitute the solutions to the optimization problem~\eqref{optimization-problem}.
On the other hand,  
the eigenvalues quantify the degree of time concentration.

\vskip.12in

% Understanding the spectral structure of this operator has implications in data compression, communications (e.g., pulse design), and efficient numerical approximations of operators with bandlimiting or time-limiting constraints.

% Time–frequency limiting operators have played a central role in signal processing, communications, and spectral estimation. Classical results—originating from the seminal work of Slepian, Landau, and Pollak \cite{}—demonstrated that, in the one-dimensional setting, the eigenvalues of such operators exhibit sharp concentration phenomena. These findings not only provided deep theoretical insights but also enabled the development of efficient numerical methods and robust tools, such as prolate spheroidal wave functions and discrete prolate spheroidal sequences.

The discrete version of the concentration problem and the eigenvalue distribution of time-frequency limiting operators have been considered in one dimension \cite{Bell5, Karnik21}. 
In many modern applications, however, the data are inherently multidimensional.  See e.g., 
\cite{Yang02, Shkolnisky17A, Shkolnisky17B,  lederman2017numerical,LedermanSinger17, LedermanSinger20} and references therein.
For example, in image processing,
 images are naturally two-dimensional, and their effective analysis, compression, and restoration often rely on understanding how energy is distributed across both spatial and frequency domains.
   Applications such as video processing, volumetric data in medical imaging, and sensor array measurements involve signals defined on higher dimensional grids, where classical one-dimensional methods fall short.  Efficient compression schemes benefit from exploiting the inherent redundancy in multidimensional (or multi-indexed) data, which can be better characterized by extending time–frequency localization results to higher dimensions.
 Extending the one-dimensional theory to a multidimensional framework is thus both a natural progression and a necessity for addressing these contemporary challenges.

\medskip  
In this paper, we extend the framework of the one-dimensional time-frequency concentration problem from a finite setting to a multidimensional  (or a higher dimensional) finite setting. We then investigate the eigenvalue distribution, the clustering behavior of eigenvalues near $0$
and $1$, and characterize the associated transition band. These results generalize those of \cite{Karnik21} to higher‐dimensional discrete settings, and can be viewed as the discretized analogues of the higher dimensional Euclidean  Theorem 1.4 proved in \cite{israel2024eigenvalue} .
 
\medskip

\subsection{Main Result}
\label{ssec:main-results} 
Fix integers  $d\geq 1$ and $N, M, K$ satisfying $$M< N, \quad    K\le \Bigl\lfloor\frac{N-1}{2}\Bigr\rfloor.$$ 

Set the frequency width 
  $$W:=\frac{2K+1}{2N}\in (0,1/2)$$
 
Write the $d$‑dimensional spatial and frequency ``cubes"  $\pmod{N}$ as
\[
\mathbf M= [0,M]^d := \{0,\dots,M-1\}^d,
\qquad
\mathbf K= [-K,K]^d := \{-K,\dots,K\}^d,
\]

Let \(T_{\mathbf{M}}\) and \(B_{\mathbf{K}} = B_{\mathbf{K},W}\) denote the limiting operators defined in Section~\ref{limiting-operators}.
 Let  $A = T_{\mathbf M}\,B_{\mathbf K}\,T_{\mathbf M}$, be  the prolate matrix associated  with the cubes  \([0,M]^d\) and \([-K,K]^d\). 
The matrix \(A\) is positive definite, its eigenvalues (which depend on \(\mathbf M\) and \(\mathbf K\)) lie in \([0,1]\) (see Theorem \ref{A-is-selfadjoint} below), and we denote the positive ones in non-increasing order by

 \[
1 \;>\;\lambda_{\mathbf N}^{(1)}(\mathbf M, \mathbf K)\;\ge\;\cdots\;\ge\;\lambda_{\mathbf N}^{(\ell)}(\mathbf M, \mathbf K)\;>\;0.
\]
To simplify notation, we drop the dependence of the eigenvalues on \(\mathbf M\) and \(\mathbf K\); henceforth, we write
\[
  \lambda_{\mathbf N}^{(k)}=\lambda_{\mathbf N}^{(k)}(\mathbf M,\mathbf K).
\]
For \(\epsilon>0\) set
\begin{align*}
&\mathscr{m}_\epsilon(\textbf{M,K}) := \sharp \{r\in \mathbb N: ~ \lambda_{\bf N}^{(r)} >  \epsilon\},  \quad  \epsilon\in (0,1)\\
&\mathscr{n}_\epsilon(\textbf{M,K}) := \sharp \{r\in \mathbb N: ~ \lambda_{\bf N}^{(r)}  \in (\epsilon,1-\epsilon) \}, \quad \epsilon \in (0, 1/2).
\end{align*}
$ 
\mathscr{m}_\epsilon(\textbf{M,K}) $  is the number of the eigenvalues  $\epsilon$-close to 1, and $\mathscr{n}_\epsilon(\textbf{M,K})$ is the  number of the eigenvalues 
  in the `transition band' (a.k.a `plung region'). 

 \medskip 
 
 Our main result follows.

 \begin{theorem}[Spectrum Concentration]\label{main-theorem}

There exists a constant \(C_d>0\) depending only on \(d\) such that for every
 $\epsilon>0$ and $K\le \Bigl\lfloor\frac{N-1}{2}\Bigr\rfloor$
  \begin{align}\label{M-2MW1}
&\Bigl|  \mathscr{m}_\epsilon({\bf M}, {\bf K}) - (2MW)^d\Bigr| \leq C_d 
B_d(MW, \epsilon),    & \epsilon\in(0,1) \\\label{N} 
& \mathscr{n}_\epsilon(\mathbf M,\mathbf K)
\;\leq \;C_d \; B_d(MW, \epsilon),   & \epsilon\in(0,\tfrac12)
\end{align}
where 

 $$B_d(MW, \epsilon):= \log(MW)\,\log\!\bigl(\tfrac1\epsilon\bigr)\,
\max\Bigl\{\bigl[\log(MW)\,\log\tfrac1\epsilon\bigr]^{\,d-1},\;(2MW)^{\,d-1}\Bigr\}. 
$$ 
\end{theorem}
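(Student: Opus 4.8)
The plan is to reduce everything to the one‑dimensional estimates by exploiting the Cartesian (tensor) structure of the set‑up. First I would observe that, because the spatial cube $\mathbf M=[0,M]^d$ and the frequency cube $\mathbf K=[-K,K]^d$ are Cartesian products and the discrete Fourier transform on $\mathbb Z_N^d$ factors as the $d$‑fold tensor power of the one‑dimensional transform, the limiting operators factor as $T_{\mathbf M}=T_M^{\otimes d}$ and $B_{\mathbf K}=B_K^{\otimes d}$ (with $T_M,B_K$ the $1$D operators of Section~\ref{limiting-operators}). Consequently
\[
A=T_{\mathbf M}\,B_{\mathbf K}\,T_{\mathbf M}=\bigl(T_M\,B_K\,T_M\bigr)^{\otimes d}=A_1^{\otimes d},
\]
where $A_1=T_M B_K T_M$ is the one‑dimensional prolate matrix. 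Hence the eigenvalues of $A$ are exactly the products $\mu_{i_1}\cdots\mu_{i_d}$ over all $d$‑tuples, where $\mu_1\ge\mu_2\ge\cdots\ge 0$ are the eigenvalues of $A_1$. Writing $F(t):=\#\{i:\mu_i>t\}$ for the one‑dimensional counting function, the theorem becomes a purely combinatorial statement about how these products distribute, and the only analytic input required is the $d=1$ case (furnished by the discrete prolate analysis of \cite{karnik2019fast,israel2024eigenvalue}): with $w_\epsilon:=\log(MW)\log(1/\epsilon)$,
\[
\bigl|F(\epsilon)-2MW\bigr|\le C_0 w_\epsilon\ \ (\epsilon\in(0,1)),\qquad F(\epsilon)-F(1-\epsilon)\le C_0 w_\epsilon\ \ (\epsilon\in(0,\tfrac12)).
\]

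For \eqref{M-2MW1} I would sandwich $\mathscr m_\epsilon(\mathbf M,\mathbf K)=\#\{(i_1,\dots,i_d):\mu_{i_1}\cdots\mu_{i_d}>\epsilon\}$ between two powers of $F$. Since every factor lies in $[0,1]$, a product exceeds $\epsilon$ only if each factor does, giving $\mathscr m_\epsilon\le F(\epsilon)^d$; conversely, if each factor exceeds $\epsilon^{1/d}$ then the product exceeds $\epsilon$, giving $\mathscr m_\epsilon\ge F(\epsilon^{1/d})^d$. Feeding the one‑dimensional bound into both sides, and using $\log(1/\epsilon^{1/d})=\tfrac1d\log(1/\epsilon)$, yields $\bigl(2MW-\tfrac{C_0}{d}w_\epsilon\bigr)^d\le\mathscr m_\epsilon\le\bigl(2MW+C_0 w_\epsilon\bigr)^d$. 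Subtracting $(2MW)^d$ and expanding by the binomial theorem leaves $\sum_{k=1}^d\binom{d}{k}(2MW)^{d-k}(Cw_\epsilon)^k$; checking the two regimes $w_\epsilon\le 2MW$ and $w_\epsilon>2MW$ separately shows each term is at most $\max\{w_\epsilon^{d},\,w_\epsilon(2MW)^{d-1}\}=B_d(MW,\epsilon)$, so the whole error is $\le C_d B_d(MW,\epsilon)$.

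For \eqref{N} I would write $\mathscr n_\epsilon=\mathscr m_\epsilon-G$, where $G=\#\{\text{products}>1-\epsilon\}$ counts products near $1$. The same ``every factor must be large'' reasoning (now via $(1-\epsilon/d)^d\ge 1-\epsilon$) gives $G\ge F(1-\tfrac{\epsilon}{d})^d$; combining with $\mathscr m_\epsilon\le F(\epsilon)^d$ and factoring the difference of $d$‑th powers,
\[
\mathscr n_\epsilon\le F(\epsilon)^d-F\!\bigl(1-\tfrac{\epsilon}{d}\bigr)^d\le\Bigl(F(\epsilon)-F\!\bigl(1-\tfrac{\epsilon}{d}\bigr)\Bigr)\,d\,\max\bigl\{F(\epsilon),\,F(1-\tfrac\epsilon d)\bigr\}^{\,d-1}.
\]
The first factor counts $1$D eigenvalues in $(\epsilon,1-\epsilon/d]$ and is controlled by the one‑dimensional transition estimate at level $\epsilon/(2d)$, i.e.\ by $C_d w_\epsilon$ after absorbing $\log(2d/\epsilon)/\log(1/\epsilon)\le C_d$ into the constant; the second factor is $\le(2MW+C_0w_\epsilon)^{d-1}\le C_d\max\{(2MW)^{d-1},w_\epsilon^{d-1}\}$. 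Multiplying gives exactly $C_d B_d(MW,\epsilon)$.

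The delicate bookkeeping lies in the choice of intermediate thresholds — $\epsilon^{1/d}$ for the lower bound on $\mathscr m_\epsilon$ and $\epsilon/d$ (respectively $\epsilon/(2d)$) for $G$ — and in verifying that substituting these into the one‑dimensional estimate costs only a $d$‑dependent constant factor. The main conceptual obstacle, and the step I expect to require the most care, is establishing the tensor identity $A=A_1^{\otimes d}$ cleanly from the definitions in Section~\ref{limiting-operators}; once that is in hand, everything downstream is a mechanical reduction to the $d=1$ case, and the unusual two‑regime shape of $B_d(MW,\epsilon)$ emerges precisely from the competition between the leading binomial term $(2MW)^{d-1}w_\epsilon$ and the top term $w_\epsilon^{d}$.
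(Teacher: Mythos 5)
Your skeleton---the tensor factorization $A = (T_M B_K T_M)^{\otimes d}$, the sandwich $F(\epsilon^{1/d})^d \le \mathscr{m}_\epsilon(\mathbf M,\mathbf K) \le F(\epsilon)^d$ for the one-dimensional counting function $F$, and the binomial expansion of the resulting $d$-th powers---is exactly the paper's (Lemmas \ref{spectrum-of-tensors}, \ref{lem1}, \ref{lem2}). The genuine gap is in your one-dimensional input. You assume $|F(\epsilon) - 2MW| \le C_0 \log(MW)\log(1/\epsilon)$ for \emph{all} $\epsilon\in(0,1)$, but this is false near $\epsilon=1$: since every 1D eigenvalue is strictly less than $1$, we have $F(\epsilon)=0$ once $\epsilon$ exceeds the top eigenvalue, while $\log(1/\epsilon)\to 0$, so the claimed inequality would force $2MW \le C_0\log(MW)\log(1/\epsilon)\to 0$. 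What the literature actually provides (Theorem \ref{Karnik21} combined with Theorem \ref{cross-index}, i.e.\ the paper's Proposition \ref{1d:prop}) is $|F(\gamma)-2MW| \lesssim \log(MW)\,\log\frac{1}{\gamma(1-\gamma)}$, whose error blows up at \emph{both} endpoints. This matters precisely where you use the input: your lower bound for $\mathscr{m}_\epsilon$ evaluates $F$ at $\gamma=\epsilon^{1/d}$, which is close to $1$ (and approaches $1$ as $d$ grows), so the true error there is dominated by $\log(MW)\log\frac{1}{1-\epsilon^{1/d}}$ --- a term that your bookkeeping step ``$\log(1/\epsilon^{1/d})=\tfrac1d\log(1/\epsilon)$'' never sees, and which is certainly not $O(w_\epsilon/d)$. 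The missing idea is exactly the paper's Lemma \ref{chi}(ii): by concavity of $t\mapsto 1-e^{t}$ one has $1-\epsilon^{1/d}\ge \frac{1-\epsilon}{d}$, hence $\log\frac{1}{\epsilon^{1/d}(1-\epsilon^{1/d})} \le C_d\,\log\frac{1}{\epsilon(1-\epsilon)}$. With that lemma in hand, your sandwich and binomial expansion go through essentially verbatim (with $C_d$ in place of your $C_0/d$) for $\epsilon\in(0,\tfrac12)$, which is the paper's proof of \eqref{M-2MW1}; the range $\epsilon\in(\tfrac12,1)$ is treated there by a separate symmetry argument.

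For \eqref{N} your route genuinely differs from the paper's, and is sound: you factor $F(\epsilon)^d - F(1-\epsilon/d)^d$ as a difference of $d$-th powers and invoke the 1D transition estimate (Theorem \ref{Karnik21}) at level $\epsilon/(2d)$, together with the counting bound only at the harmless threshold $\epsilon<\tfrac12$. The paper instead writes $\mathscr{n}_\epsilon \le \mathscr{m}_\epsilon - \mathscr{m}_{1-\epsilon}$ and applies its $\mathscr{m}$-estimate twice, once at the threshold $1-\epsilon\in(\tfrac12,1)$; your variant sidesteps that delicate application. But this alternative cannot rescue \eqref{M-2MW1}, where the threshold-near-one issue described above must be confronted head-on.
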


The bound $B_d(MW, \epsilon)$ scales optimally in dimension \(d\) as
\[
  O\!\bigl((\log(MW)\,\log\tfrac1\varepsilon)^{d}\bigr),
\]
while still explicitly reflecting the role of the time–bandwidth product \(MW\).

The result state that up to an explicitly controlled error \(B_d\), exactly \((2MW)^d\)
eigenvalues are \(\epsilon\)-close to~1, at most \(B_d\) lie in the
\emph{transition band} \((\epsilon,1-\epsilon)\), and the remainder are
\(\le\epsilon\).
 We illustrate this phenomenon in dimension $d=1$ in Figure \ref{figure:prolate9}.

\medskip

 To prove Theorem \ref{main-theorem}, observe that the multi‐indexed matrix $A$ (acting on $\mathbb{C}^{M_1\times \cdots \times M_d}$) can be expressed as the $d$‐fold composition of one‐dimensional time‐ and frequency‐limiting matrices, each acting on a single variable. Consequently, estimates for the eigenvalues $\lambda_{\mathbf{N}}^{(r)}$ follow directly from the corresponding one‐dimensional results.
Here, we borrow freely from the techniques used in the third listed author's earlier work \cite{israel2024eigenvalue} and extend Theorem 1.4 of that paper to the case of multi-dimensional discrete signals.
 In particular, we apply the one-dimensional transition‐region bounds from \cite{Karnik21} (recalled in Section \ref{prolatematrix}), together with Proposition \ref{1d:prop} (which we prove in Section \ref{auxilliary-results}) to handle those eigenvalues near 1 and those in the transition region in the multidimensional settings. The full details appear in Sections  \ref{auxilliary-results} and \ref{proof-of-main-theorem}.

\subsection{Related work on 1D prolate matrices}

The concentration operators associated with discrete prolate spheroidal sequences (DPSSs) in the discrete‐time setting have been studied extensively since the foundational work of Landau, Pollak, and Slepian in the 1960s and 1970s.  In particular, the eigenvalue distributions of the corresponding prolate matrices exhibit a characteristic clustering behavior in which most eigenvalues lie very close to either 0 or 1, with only a small “transition band’’ between these extremes.  We summarize here the key asymptotic and non‐asymptotic results on the eigenvalue distributions of both DPSS and PSWF operators.

\subsection{DPSS Eigenvalues}
 
Let \(M\in\mathbb{N}\) and \(W\in(0,\tfrac12)\).  The \(M\times M\) prolate matrix 
\[
B_{M,W}[m,n] \;=\; \frac{\sin\bigl(2\pi W(m-n)\bigr)}{\pi\,(m-n)}, \qquad m,n=0,\dots,M-1,
\]
arises as the matrix representation of the operator \(T_M B_W T_M\), where \(T_M:\ell^2(\mathbb Z)\to \ell^2(\mathbb Z)\) is the time‐limiting operator (truncation to indices \(0,\dots,M-1\)) and \(B_W:\ell^2(\mathbb Z)\to \ell^2(\mathbb Z)\) is the band‐limiting operator (restriction of the discrete‐time Fourier transform $\mathcal F: \ell^2(\mathbb Z) \to L^2(0,1)$ to \(\lvert \xi\rvert\le W\)).  Its eigenvalues satisfy   
\[
1 \;>\; \lambda_0(M,W)\;>\;\lambda_1(M,W)\;>\;\cdots>\;\lambda_{M-1}(M,W)\;>\;0
\]
and lie strictly between $0$  and $1$,  and satisfy \(\sum_{k=0}^{M-1}\lambda_k(M,W)=2MW\).  Empirically and asymptotically, one observes that approximately \(2MW\) eigenvalues lie very close to $1$, approximately \(M-2MW\) lie very close to $0$, and only \(O(\log(MW)\,\log(1/\varepsilon))\) lie in any fixed ``transition interval" \((\varepsilon,1-\varepsilon)\)  (\cite{slepian1978-V,Karnik21}). 

\begin{itemize}
  \item \textbf{Asymptotic clustering.}  Slepian  \cite{slepian1978-V} showed that for any fixed \(W\in(0,\tfrac12)\) and any real \(b\),
  \begin{align}\label{sigmoid-estimate}
  \lambda_{\lfloor\,2MW + \tfrac{b}{\pi}\,\log M\rfloor}(M,W)\;\sim\;\frac{1}{1+e^{\,b\pi}}
    \quad\text{as }M\to\infty,
\end{align}
  which implies for any fixed \(\varepsilon\in(0,\tfrac12)\),
  \[
    \#\bigl\{\,k:\,\varepsilon<\lambda_k(M,W)<1-\varepsilon\,\bigr\}
    \;\sim\;\frac{2}{\pi^2}\,\log M\;\log\!\Bigl(\tfrac{1}{\varepsilon}-1\Bigr)
    \quad\text{as }M\to\infty.
  \]
  Hence, the width of the “transition band’’ grows on the order of \(\log M\log(1/\varepsilon)\) but does not explicitly capture the dependence on \(W\).  Moreover, Slepian gave the approximation
  \[
    \lambda_{k}(M,W)\;\approx\;\Bigl[\,1 + \exp\!\Bigl(
    \tfrac{
    -\pi^2\bigl(2MW - k - \tfrac12\bigr)}{\log(8M\sin(2\pi W)) + \gamma}\Bigr)\Bigr]^{-1},
  \]
  valid for \(\lambda_k\in(0.2,0.8)\), where \(\gamma\approx0.5772\) is the Euler–Mascheroni constant.  This formula suggests 
  \(\#\{\,k:\,\varepsilon<\lambda_k(M,W)<1-\varepsilon\}\approx \tfrac{2}{\pi^2}\,\log\bigl(8e^\gamma M\sin(2\pi W)\bigr)\,\log\!\bigl(\tfrac{1}{\varepsilon}-1\bigr)\)
  for \(\varepsilon\in(0.2,0.5)\), correctly capturing the logarithmic dependence on \(M\), \(W\), and \(\varepsilon\).  

  \item \textbf{Early non‐asymptotic bounds.}  
  Zhu and Wakin \cite{ZhuWakin2017}  proved that for all integers \(M\ge2,\;W\in(0,\tfrac12)\), and \(\varepsilon\in(0,\tfrac12)\),
  \[
    \#\bigl\{\,k:\,\varepsilon\le\lambda_k(M,W)\le 1-\varepsilon\bigr\}\;\le\; \frac{2}{\pi^2}\, 
    \left(
     \log(M-1)\;+\;\frac{\,2M-1\,}{M-1}\right)\;\frac{1}{\varepsilon\,(1-\varepsilon)}\,
  \]
  which highlights the \(\log M\) dependence but fails to reflect any dependence on \(W\).  In particular, when \(\varepsilon\) is small, the \(O(1/\varepsilon)\) term dominates, and the bound can exceed \(M\).  

  \item \textbf{  Non‐asymptotic bounds-explicit $W$-dependence.}  
  Boulsane, Bourguiba, and Karoui \cite{boulsane2020discrete} refined this by showing
  % \[
  %   \#\bigl\{\,k:\,\varepsilon\le\lambda_k(M,W)\le 1-\varepsilon\bigr\}\;\le
  %   \left(
  %   \frac{1}{\pi^2}\,\log(2MW)\;+\;0.45 \;-\;\frac{2}{3}W^2\;+\;\frac{\sin^2(2\pi M W)}{6\pi^2\,M^2}
  %   \right)\;
  %   \frac{1}{\varepsilon\,(1-\varepsilon)},
  % \]
  \[
\begin{aligned}
\#\bigl\{\,k:\,\varepsilon \le \lambda_k(M,W) \le 1-\varepsilon \bigr\}
  &\;\le\;
     \Bigl(
        \tfrac{1}{\pi^2}\,\log(2MW) + 0.45 - \tfrac23 W^2
        + \tfrac{\sin^2(2\pi MW)}{6\pi^2 M^2}
     \Bigr)
     \frac{1}{\varepsilon\,(1-\varepsilon)}
\end{aligned}
\]

  valid for all \(M\ge1,\;W\in(0,\tfrac12),\;\varepsilon\in(0,\tfrac12)\).  As \(M\to\infty\) with fixed \(W\), the dominant term is \(\tfrac{1}{\pi^2}\log(2MW)\), but unfortunately the dependence on \(\varepsilon\) remains \(O(1/\varepsilon)\).  \medskip 

  \item \textbf{Fast Slepian transform bounds.}  
  In \cite{karnik2019fast}, Karnik \emph{et. al.} proved that for all \(M\in\mathbb {N},\;W\in(0,\tfrac12),\;\varepsilon\in(0,\tfrac12)\),
  \[
    \#\bigl\{\,k:\,\varepsilon<\lambda_k(M,W)<1-\varepsilon\bigr\}
    \;\le\;\Bigl(\tfrac{8}{\pi^2}\,\log(8M) + 12\Bigr)\;\log\!\Bigl(\tfrac{15}{\varepsilon}\Bigr). 
  \]
  This bound captures the correct \(\log M\) dependence on the matrix size and introduces the optimal \(O(\log(1/\varepsilon))\) dependence on \(\varepsilon\), but with a leading constant \(8/\pi^2\) that is four times larger than the asymptotic constant \(2/\pi^2\).  The bound also fails to reflect the dependence on $W$.  

\item \textbf{Improved non-Asymptotic transition bounds.}
 In \cite{Karnik21}, Karnik \emph{et al.} show that for all integers \(N\), bandwidths \(W\in(0,\tfrac12)\), and thresholds \(\varepsilon\in(0,\tfrac12)\),
   \[
      \#\bigl\{\,k : \varepsilon < \lambda_k(N,W) < 1 - \varepsilon\bigr\}
      \;\le\;
      \frac{2}{\pi^2}\,\log\bigl(100\,M\,W + 25\bigr)\,\log\!\Bigl(\frac{5}{\varepsilon(1-\varepsilon)}\Bigr)
      \;+\;7.
    \]
    This bound achieves the optimal \(O(\log MW\,\log\tfrac1\varepsilon)\) scaling  and  captures the role of the bandwidth \(W\).  
\end{itemize} 
\medskip

 Despite these advances, obtaining non-asymptotic bounds for the transition band in higher dimensions has, to the best of our knowledge, remained an open problem. In this special case, we close that gap by establishing bounds of the form 
$$ 
\sharp \{k\in \mathbb N: ~ \lambda_{\bf N}^{(k)}  \in (\epsilon,1-\epsilon) \} \leq C_d 
B_d(MW,\epsilon)$$
as we prove in Theorem 
\ref{main-theorem} 
\eqref{N}.   
 This bound scales optimally in dimension \(d\) as
\(
  O\!\bigl((\log(MW)\,\log\tfrac1\varepsilon)^{d}\bigr) 
\) scaling  and  captures the role of the time-bandwidth \(MW\).  

In the same theorem,  we also give a non‐asymptotic estimate for the number of eigenvalues that are 
 $\epsilon$-close to 1.

For the continuous‐time concentration operators associated with prolate spheroidal wave functions (PSWFs) in both one dimension and higher dimensions, see e.g.,  \cite{israel2024eigenvalue,hughes2024eigenvalue,israel2015eigenvalue,marceca2024eigenvalue} and references therein.

 \subsection{Organization of the paper}
\label{ssec:organization}

 The remainder of this paper is organized as follows: 
 \begin{itemize}
     \item 
 In Section \ref{background}, we fix multi‐index conventions, recall the one‐dimensional DFT and Dirichlet kernel facts, and introduce the 1D time‐ and band (or frequency)‐limiting projections. Lemmas \ref{timelimitopertorisaprojection}-\ref{projection} collect the key spectral properties in 1D that we will build upon to establish our results in higher dimensions.  
 \item In Section \ref{prolatematrix},  we define the 
$d$ dimensional 
 time‐ and frequency‐limiting operators, and assemble the generalized prolate operator. We prove that this operator is Hermitian-Toeplitz and show that its eigenvalues factor into products of 1D eigenvalues (Lemma \ref{spectrum-of-tensors}).  Basic eigenvalue behavior and multiplicity counts in higher dimensions are also established. 

\item In Section \ref{auxilliary-results}, 
 we derive non‐asymptotic bounds on the number of eigenvalues in the ``transition band", building 
on Sections \ref{background}–\ref{prolatematrix}.  Lemmas \ref{lem1}–\ref{lem2} handle the upper‐ and lower‐tail estimates. These combine to yield the precise eigenvalue‐clustering bound stated in Theorem \ref{main-theorem}.
\item In Section \ref{proof-of-main-theorem}
 we complete the proof of the main Theorem \ref{main-theorem}.
 \item 
 Appendix ~ \ref{appendix:proof-isomorphism}
  contains proof of some  results    referenced in Sections \ref{background}-\ref{auxilliary-results}, and  Appendix~\ref{appendix:figures} contains all numerical illustrations of the results and observations.
  \end{itemize}

 \section*{Acknowledgements}      
The authors would like to thank  Arie Israel and Kevin Hughes for their valuable input and discussions during the preparation of this paper.

\section{Notations and preliminaries}\label{background}

 \subsection{Multi-indexed signals}
 Let \( d \in \mathbb{N} \) denote the number of dimensions. 
In this paper, we work in a \( d \)-dimensional setting and adopt the following notation:

    A multi-index is denoted by a bold letter. For example, \(\mathbf{n} = (n_1, n_2, \ldots, n_d)\) represents a \(d\)-dimensional index where each \( n_i \) is a    nonnegative integer. We denote the set of all such indices by \(\mathbb{Z}^d\) (or \(\mathbb{N}^d\) when nonnegative indices are needed).
\vskip.1in 

For an integer $N$, we define the discrete interval $[0,N]=\{0, 1, \cdots, N-1\}$.  
For $d\geq 1$ and  the integers $N_1, \cdots, N_d$, we define the  $d$-dimensional  {\it discrete cube}  (or grid),  as 
\begin{align}\label{grid}
\mathcal{I} =  [0,N_1]\times \cdots \times [0,N_d]. 
 \end{align}
 
 \vskip.1in

    A signal  indexed by   the cube $\cI$ 
      is   represented as \[ x = \{ x[{\mathbf{n}}] \}_{\mathbf{n} \in \mathcal{I}}\in \mathbb C^{N_1\times \cdots \times N_d}. \] 
      
      For the index set \(\mathcal I\), we denote the space of all such signals by
\(\mathbb C^{N_1\times\cdots\times N_d}\)
(or \(\mathbb R^{N_1\times\cdots\times N_d}\) in the real case).  
% \footnote{{\color{red} decide where to put it:}
% Note that this space can be also identified as $\ell^2(\mathbb Z_{N_1}\times \cdots \times  \mathbb Z_{N_d})\equiv \ell^2(\mathbb Z_{N_1}) \otimes \cdots \otimes \ell^2(\mathbb Z_{N_d}) \equiv \mathbb C^{N_1}\otimes \cdots \otimes \mathbb C^{N_d}$.}

      When $d=1$ and $N_1=N$, the signal $x\in \mathbb C^N$ is a complex vector of finite length $N$, i.e., $x=(x(0), x(1), \cdots, x(N-1))$.   When $d=2$, the signal $x$ can be  naturally identified with an 
 $N_1\times N_2$ matrix, where the entries of $i$th row and $j$th column is $x(i,j)$. It is custom to denote the space of complex (or real) matrices  by $\mathbb C^{N_1\times N_2}$ (or $\mathbb R^{N_1\times N_2}$).

 \vskip.1in

    For vectors \( x \in \mathbb{C}^{N_1 \times \cdots \times N_d} \), the \( \ell_2 \)-norm is defined as
    \[
    \|x\|_2 = \left( \sum_{\mathbf{n} \in \mathcal{I}} \left|x[\mathbf{n}]\right|^2 \right)^{1/2}.
    \]
    
  % Here, $|x_{\bf n}|^2 = x_{n_1}^2+ \cdots + x_{n_d}^2$. 

\vskip.1in 
When $d=1$, any linear map $ \mathbb C^N\to \mathbb C^N$ can be considered as a $N\times N$ matrix. For $d=2$, and higher, any linear map  $ \mathbb C^{N_1\times \cdots \times N_d} \to \mathbb C^{N_1\times \cdots \times N_d}$ 
 can be viewed as a multidimensional array with $2d$ indices.

 \medskip

% In this paper, we consider a special class of linear maps, which we define in the following section.

 \subsection{Tensor products}
Let \( \mathbf{u}, \mathbf{v} \in \mathbb{C}^d \). The tensor product (also called the outer product) of \( \mathbf{u} \) and \( \mathbf{v} \), denoted \( \mathbf{u} \otimes \mathbf{v} \), is a $d\times d$ matrix with entries in $\mathbb  C$ defined by:
\begin{align}\label{tensor-prod-2vecs}
(\mathbf{u} \otimes \mathbf{v})_{i,j} = u_i \cdot v_j, \quad \text{for } i, j = 1, \dots, d
\end{align}

This definition can be naturally generalized to more than two vectors. Given vectors \( \mathbf{u}^{(1)}, \mathbf{u}^{(2)}, \dots, \mathbf{u}^{(k)} \in \mathbb{C}^d \), their tensor product is a \( k \)-th order tensor of shape \( d \times d \times \cdots \times d \) (with \( k \) modes), defined by:
\[
\left( \otimes_{i=1}^k \mathbf{u}^{(i)} \right)_{n_1, \dots, n_k} = \prod_{i=1}^k u^{(i)}_{n_i}, \quad \text{for } n_i = 1, \dots, d
\]

Let $V_1,\dots,V_d$ be vector spaces over a common field $\mathbb{F}$ ($\mathbb R$ or $\mathbb C$), each equipped with an inner product $\langle\cdot,\cdot\rangle_i$.
For $u_i,v_i\in V_i$ ($1\le i\le d$), define the inner product between the elementary tensors
\[
  u \;=\; u_1 \otimes \cdots \otimes u_d,
  \qquad
  v \;=\; v_1 \otimes \cdots \otimes v_d\]
by
\!
\[ \langle u, v \rangle \;:=\; \prod_{i=1}^{d} \langle u_i, v_i \rangle_i.
\]
\!
This rule extends by bilinearity to the full tensor product $\otimes_{i=1}^{d} V_i$.

 \medskip

      Let \( T, \, S: \mathbb C^d \to \mathbb C^d \)   be linear maps between vector spaces over a common field \( \mathbb{F} \) ($\mathbb R$ or $\mathbb C$). The \emph{tensor product} of the linear maps \( T \) and \( S \), denoted  by \( T \otimes S \), is the linear map
\begin{align*} 
T \otimes S : \mathbb C^d  \otimes \mathbb C^d   \longrightarrow \mathbb C^d  \otimes \mathbb C^d 
 \end{align*}
defined on pure tensors $v\otimes v'$, $v, v'\in \mathbb C^d$,  by
\begin{align}\label{tensor-product-operators}
(T \otimes S)(v \otimes v') = T(v) \otimes S(v'),
\end{align}
and extended linearly to all of \(\mathbb C^d\otimes \mathbb C^d\).    

 For any maps $T', S': \mathbb C^d\to \mathbb C^d$,  the composition of $T'\otimes S'$ and $T\otimes S$ is given by 
\!
\begin{align}\label{composition-of-tensored-op}
(T\otimes S) \circ (T'\otimes S') = (T\circ T') \otimes (S\circ S').
\end{align}
 
These definitions naturally generalize to more than two vector spaces and maps.

%\end{enumerate}

\medskip

 \subsection{Multi-indexed discrete Fourier transform (DFT):}
Consider a $1$-dimensional signal $x\in \mathbb C^N$. 
 The   (1D)  DFT of \( x\)  is denoted by $F_N(x)=\widehat{x}$ and defined as
$$F_N(x)(k)=\widehat{x}(k) = \cfrac{1}{\sqrt{N}} \sum_{n=0}^{N-1}   x(n) e^{-j2\pi \frac{kn}{N}}, \qquad k\in [0,N].$$
This definition can be extended into the signals with an index in a higher dimensional discrete cube: 
Consider a \(d\)-dimensional signal  \( x=\{x[\mathbf{n}]\} \) defined on the grid  (or discrete cube) $\cI$ defined in \eqref{grid}, 
where  \(\mathbf{n} = (n_1, n_2, \ldots, n_d)\in \cI\). The  multidimensional DFT of \( x\)  is denoted by ${\bf F_N}(x)= \widehat{x}$ and defined as
\begin{align}\label{DFT}
{\bf F_N}(x)({\bf k})=\widehat{x}[\mathbf{k}] = \frac{1}{\sqrt{N_1 \cdots N_d}} \sum_{n_1=0}^{N_1-1} \cdots \sum_{n_d=0}^{N_d-1} x[n_1, \ldots, n_d] \, e^{-j 2\pi \left(\frac{k_1 n_1}{N_1} + \cdots + \frac{k_d n_d}{N_d}\right)},
 \end{align}
for \(\mathbf{k} = (k_1, k_2, \ldots, k_d)\) with $k_i$ in the discrete interval 
\([0,N_i]\). The multidimensional DFT can be computed as a sequence of one-dimensional DFTs along each dimension. This separability property is fundamental for efficient computation using algorithms such as the Fast Fourier Transform (FFT). 
    The original signal  
    $x=\{x[{\bf n}]\}_{\bf n\in \cI}$ 
 can be recovered at each  `position' ${\bf n}$ via the inverse DFT (IDFT) as follows:
    \[
    x[\mathbf{n}] = \frac{1}{\sqrt{N_1 \cdots N_d}} \sum_{k_1=0}^{N_1-1} \cdots \sum_{k_d=0}^{N_d-1} \hat{x}[k_1, \ldots, k_d] \, e^{j 2\pi \left(\frac{k_1 n_1}{N_1} + \cdots + \frac{k_d n_d}{N_d}\right)}.
    \]

    The DFT is a unitary transformation. In particular, it preserves the energy of the signal, which is  known as Plancherel identity: 
    \[
    \sum_{\mathbf{n} \in \mathcal{I}} |x[\mathbf{n}]|^2 =  
    % \frac{1}{{N_1 \cdots N_d}} 
\sum_{\mathbf{k} \in \mathcal{I}} |\hat{x}[\mathbf{k}]|^2.
    \]

   Notice,  both \( x[\mathbf{n}] \) and \( \hat{x}[\mathbf{k}] \) are periodic in each dimension with periods \( N_1, N_2, \ldots, N_d \), respectively.  

   When $d=1$, then the (1D) DFT is given by a square and unitary matrix. For the simplicity, let $N_1=N$.  If we define this matrix by $F_{N}$, then for any $n, m\in [0,N]$,    the $n$-th row and $m$-the column is given by $[F_{N}]{(n,m)}= \frac{1}{\sqrt{N}} e^{-j2\pi \frac{nm}{N}}$, and its IDFT is given by 
$ [F_{N}^{-1}](n,m)= \frac{1}{\sqrt{N}} e^{j2\pi \frac{nm}{N}}$.

\vskip.1in

By   
  \eqref{tensor-product-operators},
the higher (multi)-dimensional discrete Fourier transform    \eqref{DFT}    in dimension \( d \) {\it can be expressed} as the \( d \)-fold tensor (or Kronecker) product of one-dimensional DFTs. Specifically, let \( \mathbf{F}_{\mathbf{N}} \) denote the (dD) DFT acting on the signal space \( \mathbb{C}^{N_1 \times \cdots \times N_d} \). Identifying this vector space isomorphically with the tensor product
\[
\mathbb{C}^{N_1} \otimes \cdots \otimes \mathbb{C}^{N_d},
\]
it follows that the multidimensional DFT is given by
\[
\mathbf{F}_{\mathbf{N}} = F_{N_1} \otimes \cdots \otimes F_{N_d},
\]
where each \( F_{N_i} \) denotes the one-dimensional DFT operator acting on \( \mathbb{C}^{N_i} \).
Here, we use the notation ${\bf N} = (N_1, \cdots, N_d)$. 
 
\vskip.1in

To see how this works in two dimensions, let \( d = 2 \), with \( N_1 = N_2 = 2 \). The 1D  DFT matrix of size \( 2 \times 2 \) is
\[
F_2 = \frac{1}{\sqrt{2}} \begin{bmatrix}
1 & 1 \\
1 & -1
\end{bmatrix}.
\]
Then the 2D DFT on \( \mathbb{C}^{2 \times 2} \) can be expressed as the tensor product
\[
\mathbf{F}_{(2,2)} = F_2 \otimes F_2 =  
\begin{bmatrix}
1 & 1 \\
1 & -1
\end{bmatrix}
\otimes
\begin{bmatrix}
1 & 1 \\
1 & -1
\end{bmatrix}.
\]
Evaluating this tensor product gives a \( 4 \times 4 \) matrix acting on vectorized \( 2 \times 2 \) inputs:
\[
\mathbf{F}_{(2,2)} = \frac{1}{2} 
\begin{bmatrix}
1 & 1 & 1 & 1 \\
1 & -1 & 1 & -1 \\
1 & 1 & -1 & -1 \\
1 & -1 & -1 & 1
\end{bmatrix}.
\]

This matrix performs the 2D DFT by simultaneously transforming along rows and columns. It is important to note that although \( F_2 \otimes F_2 \) and the DFT  \( F_4 \) on $\mathbb C^4$ are both \( 4 \times 4 \) matrices, they represent fundamentally different transforms. The matrix \( F_4 \) corresponds to the one-dimensional DFT on \( \mathbb{C}^4 \), involving 4th roots of unity and acting on 1D signals of length 4. In contrast, \( F_2 \otimes F_2 \) is the tensor (Kronecker) product of two \( F_2 \) matrices and corresponds to a two-dimensional separable DFT on \( \mathbb{C}^{2 \times 2} \), acting on 2D signals. The two transforms differ both in spectral content and  structure. 
 
\medskip 

 {\it Remark.}
 (2D vs.\ 1D DFT).
The 2D (two–dimensional) DFT \(F_{N_1}\otimes F_{N_2}\) is the Fourier transform on the  tensor product 
\(\mathbb{C}^{N}\otimes\mathbb{C}^{N}\), with underlying group  \(\mathbb{Z}_{N_1}\times\mathbb{Z}_{N_2}\), 
whose characters are
\[(n_1,n_2)\mapsto e^{-2\pi j\bigl(k_1 n_1/N_1 + k_2 n_2/N_2\bigr)}.\]
In contrast, the 1D (one-dimensional) DFT \(F_{N_1N_2}\) acts on $\mathbb{C}^{N_1N_2}$ the underlying  cyclic group
\(\mathbb{Z}_{N_1N_2}\),
with characters
\(n\mapsto e^{-2\pi j k n/(N_1N_2)}\).
Although both transforms are represented by \(N_1N_2\times N_1N_2\) matrices, the underlying groups are non-isomorphic whenever \(N_1,N_2>1\); thus they differ in spectral basis and separable structure.
Flattening an \(N_1\times N_2\) array into a length-\(N_1N_2\) vector does \emph{not} convert the 2D DFT into \(F_{N_1N_2}\).

 \begin{comment}
    A shift in the spatial domain corresponds to a modulation in the frequency domain, and conversely, a modulation in the spatial domain corresponds to a shift in the frequency domain. This property is critical in understanding and designing filters and windowing functions. 
\end{comment} 

\subsection{Limiting operators}

 % {\bf Domain–limiting operators:} 
 \subsubsection{Domain-limiting operators}\label{limiting-operators}
 
Let \( x \in \mathbb{C}^{N_1 \times N_2 \times \cdots \times N_d} \) be a \(d\)-dimensional signal defined on the grid $\cI$ \eqref{grid}.  Let \begin{align}\label{time-concentration-grid}
\mathcal{I}_T =[0,M_1] \times  \cdots \times [0,M_d-1],
\end{align}
be a {\it hyper-rectangular box}, 
with \( M_i \leq N_i \) for \( i=1,\ldots,d \). We say the  signal 
$x$  is \textit{domainlimited} to   $\cI_T$ 
 if it satisfies 
\!
$$
x[\textbf{n}] = 0 \quad \text{for all} \quad \textbf{n}\in \cI\setminus \cI_T.
$$
    
We call $\cI_T$ the \textit{concentration  domain}.

We define {\it the  domain–limiting operator}  \( T_{\mathbf{M}} \) as the operator that restricts the signal \( x \) to a    concentration domain $\cI_T$.  That is, the operator \( T_\textbf{M} : \mathbb{C}^{N_1 \times \cdots \times N_d} \to \mathbb{C}^{N_1 \times \cdots \times N_d} \) is defined element-wise as
\[
(T_{\mathbf{M}}x)[n_1, n_2, \ldots, n_d] =
\begin{cases}
x[n_1, n_2, \ldots, n_d], & \text{if } (n_1, n_2, \ldots, n_d) \in \mathcal{I}_T, \\
0, & \text{otherwise.}
\end{cases}
\]
  
 \medskip 
 
Notice that when $M_i=N_i$ for all $i$,  the operator $T_{\bf M}$ is the identity map.  
\medskip 

To illustrate this  in one-dimensional case,   
let \( x \in \mathbb{C}^N \) be a 1D signal indexed by \( n \in [0,N] \). The domain-limiting operator   \( T_M : \mathbb{C}^N \to \mathbb{C}^N \), with \( M \leq N \),  is known as  {\it time-limiting operator}, and is defined by
\[
(T_M x)[n] =
\begin{cases}
x[n], & \text{if } n \in [0,M], \\
0, & \text{otherwise}.
\end{cases}
\]

Hence, any 1D signal whose support is contained in the interval $[0,M]$ is called  a \textit{time-limited} signal with a time duration (or time-width) of $M$.

  Note that 
 the domain-limiting operator $T_\textbf{M}$ is the tensor product of the  time–limiting operators in dimension 1D.
  Indeed, we have 
 the signal space (see Apendix \ref{appendix:proof-isomorphism} for the proof)
\[
  \mathbb{C}^{N_1\times\cdots\times N_d}
  \;\cong\;
  \mathbb{C}^{N_1}\otimes\cdots\otimes\mathbb{C}^{N_d},
\]
the $d$-dimensional domain-limiting operator splits into a Kronecker
product of its one-dimensional versions:
\begin{align}\label{time-limiting-op-HD}
  T_{\mathbf{M}}
  \;=\;
  T_{M_1}\otimes T_{M_2}\otimes\cdots\otimes T_{M_d}.
\end{align}

For a separable signal $x=x_1\otimes\cdots\otimes x_d$,
\[
  (T_{\mathbf{M}}x)[n_1,\dots,n_d]
  \;=\;
  \bigl(T_{M_1}x_1\bigr)[n_1]\,
  \cdots\,
  \bigl(T_{M_d}x_d\bigr)[n_d],
\]
which matches the element-wise definition of $T_{\mathbf{M}}$ on the tensor product of space $\otimes_{i=1}^d \mathbb{C}^{N_i}$, thus on $\mathbb{C}^{N_1\times \cdots \times N_d}$ by canonical linear isomorphism.  
 This completes our claim.

\medskip

% Moreover, the domain–limiting operator \(T_{\mathbf{M}}\) can be interpreted as a diagonal linear map  (or matrix in 1D ) with ones on the diagonal entries \((\mathbf{n}, \mathbf{n})\) for all \(\mathbf{n} \in \mathcal{I}_T\), and zeros elsewhere.

The following lemma is stated  for $T_M$ in 1D. However, the result also holds for $T_{\bf M}$ in higher dimensions using the representation  
\eqref{time-limiting-op-HD}
and  the definition \eqref{composition-of-tensored-op} .

% In terms of the map $T_M$, the definition is equivalent to saying that $T_M(x) = x$. 

\begin{lemma}\label{timelimitopertorisaprojection} The operator $T_{M}$ is an orthogonal projection map  onto the space of  time-limited signals with duration of $M$. 
\end{lemma}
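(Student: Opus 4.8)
The plan is to verify the two defining algebraic properties of an orthogonal projection --- idempotency ($T_M^2 = T_M$) and self-adjointness ($T_M^* = T_M$) --- and then to identify the range of $T_M$ with the space of time-limited signals of duration $M$. The standard characterization I would invoke is that a linear operator on $\mathbb{C}^N$ is an orthogonal projection onto a subspace $V$ if and only if it is idempotent and self-adjoint and has range $V$; so once the three facts are in place the conclusion is immediate.

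First I would observe that in the standard basis $T_M$ is represented by the real diagonal matrix $D = \mathrm{diag}\bigl(\mathbf{1}_{[0,M]}(0),\dots,\mathbf{1}_{[0,M]}(N-1)\bigr)$, whose $n$-th diagonal entry is $1$ when $n\in[0,M]$ and $0$ otherwise. Idempotency is then immediate, since each diagonal entry $d_n\in\{0,1\}$ satisfies $d_n^2=d_n$; equivalently, applying the coordinate-zeroing rule twice leaves the signal unchanged, so $(T_M^2 x)[n]=(T_M x)[n]$ for every $n$. Self-adjointness follows because $D$ is real and diagonal, hence Hermitian; alternatively one checks directly that
\[
\langle T_M x, y\rangle \;=\; \sum_{n\in[0,M]} x[n]\,\overline{y[n]} \;=\; \langle x, T_M y\rangle
\]
for all $x,y\in\mathbb{C}^N$.

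Second I would identify the range. By construction $\mathrm{range}(T_M)=\{x\in\mathbb{C}^N : x[n]=0 \text{ for all } n\notin[0,M]\}$, which is exactly the subspace of signals time-limited to $[0,M]$, and $T_M$ acts as the identity on this subspace. Combining this with idempotency and self-adjointness pins down $T_M$ as the orthogonal projection onto that subspace. The argument presents no genuine obstacle; the only point requiring a moment of care is to confirm that $\mathrm{range}(T_M)$ is \emph{precisely} the time-limited subspace and that $T_M$ fixes it pointwise, which together rule out $T_M$ being merely some idempotent and identify it as the orthogonal projection onto the intended space. (The higher-dimensional statement for $T_{\mathbf M}$ then follows from the Kronecker factorization \eqref{time-limiting-op-HD}, since a tensor product of orthogonal projections is again an orthogonal projection.)
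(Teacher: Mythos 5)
Your proof is correct and follows essentially the same route as the paper's: both verify self-adjointness via the diagonal $0$--$1$ matrix representation and idempotency via the fact that $T_M$ fixes every time-limited signal. Your version is slightly more complete in that you explicitly identify $\mathrm{range}(T_M)$ with the time-limited subspace and cite the characterization of orthogonal projections, a step the paper leaves implicit.
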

\begin{proof}
     We need to prove that $T_M$ is self-adjoint and idempotent. By the definition of the map $T_M$, the matrix representation of this map is a diagonal matrix with $1$ in the positions $(m,m)$, $m\in [0,M]$, and $0$ elsewhere. This proves that $T_M$ is symmetry. To prove that $T_M$ is idempotent, 
  assume that $x$ is a time-limited
  signal of duration $M$. That is   $x[n] = 0$ if $n \notin M$. Then, by the definition of $T_M$,  
    $$T_Mx=x$$
    Clearly, $T_M^2=T_M$ because by applying $T_M$ once to any signal, it becomes an  time-limited signal and $T_Mx=x$ for any time-limited signal $x$. 
\end{proof}

The conclusion of the lemma also holds for $T_{\mathbf{M}}$. 
\vskip.1in

Next, we define the frequency-limiting signals  and frequency-limiting operators,  the dual of the time-limiting  signals and time-limiting operators in the frequency domain.

\subsubsection{Band–limiting operators} 

In parallel with the concept of time-limited signals, we first define their dual in the frequency domain.

\begin{definition}
Let \( N, K \in \mathbb{N} \)  
such that  
\( K \leq \lfloor (N-1)/2 \rfloor \). A signal \( x \in \mathbb{C}^N \) is said to be \emph{\(K\)-band-limited} if its discrete Fourier transform \( \widehat{x} \in \mathbb{C}^N \) satisfies
\[
\widehat{x}(\ell) = 0 \quad \text{for all } \ell \in [N] \setminus \{-K, \ldots, K\}.
\]
\end{definition}
We shall denote the space of such signals as $B(K)$. 

\medskip 

Before we define band-limiting operators, we need to introduce a kernel. 
Let \( N \in \mathbb{N} \) be the ambient signal length, and 
$K<N$ such that $2K+1<N$. 
The \emph{Dirichlet kernel} \( D : [0,N] \to \mathbb{C} \) is defined by
\begin{align}\label{DrichletKernel}
D[n] = \sum_{k = -K}^{K} e^{2\pi j k n / N}, \quad n \in  [0,N]. 
\end{align}
  This kernel is the inverse discrete Fourier transform of the indicator function on the frequency band \( \{-K, \ldots, K\} \), and plays a key role in describing bandlimited signals on \( [0,N] \).
If we denote by  $1_K$   the indicator function of the set, then $1_K(k)=1$ when $k=-K, \cdots, K$,  and zero elsewhere. 
\begin{lemma}\label{Drichlet-as-INT}
    For \( n \not\equiv 0 \mod N \), the Dirichlet  kernel  admits the closed-form expression
\begin{align} 
D[n] =  \frac{\sin\!\Bigl(2W\pi n \Bigr)}{\sin\!\Bigl(\pi \frac{n}{N}\Bigr)}, 
\end{align}
with \( D[0] =2WN \), where \( W  \in (0,1/2) \) and  given by $W=\cfrac{2K+1}{2N}$. 
%denote the \textit{bandwidth}.
Moreover, $D= F^{-1}(1_K)$. 
\end{lemma}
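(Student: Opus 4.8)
The plan is to split into the two cases $n \equiv 0 \pmod N$ and $n \not\equiv 0 \pmod N$ and, in each case, evaluate the finite exponential sum \eqref{DrichletKernel} directly. In the first case (in particular $n=0$), every summand $e^{2\pi j kn/N}$ equals $1$, so $D[0]=\sum_{k=-K}^{K}1 = 2K+1$. Since $W=(2K+1)/(2N)$ by hypothesis, we have $2WN=2K+1$, which gives the asserted value $D[0]=2WN$.

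Next, for $n\not\equiv 0\pmod N$, I would set $\theta:=2\pi n/N$ and $\omega:=e^{j\theta}$; the hypothesis guarantees $\omega\neq 1$, equivalently $\sin(\theta/2)=\sin(\pi n/N)\neq 0$ (indeed for $n\in\{1,\dots,N-1\}$ one has $\pi n/N\in(0,\pi)$, so the sine is strictly positive). The idea is to recognize $D[n]=\sum_{k=-K}^{K}\omega^{k}$ as a symmetric finite geometric series and sum it, either by factoring out $\omega^{-K}$, applying $\sum_{j=0}^{2K}\omega^{j}=(\omega^{2K+1}-1)/(\omega-1)$, and multiplying numerator and denominator by $\omega^{-1/2}$ to symmetrize, or by invoking the standard identity $\sum_{k=-K}^{K}e^{jk\theta}=\sin\!\bigl((K+\tfrac12)\theta\bigr)/\sin(\theta/2)$. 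Substituting $\theta=2\pi n/N$ yields
\[
D[n]=\frac{\sin\!\bigl((2K+1)\pi n/N\bigr)}{\sin(\pi n/N)},
\]
and replacing $(2K+1)/N$ by $2W$ (using $W=(2K+1)/(2N)$ once more) produces the claimed formula $D[n]=\sin(2W\pi n)/\sin(\pi n/N)$.

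Finally, for the identity $D=F^{-1}(1_K)$ I would simply apply the IDFT formula to the indicator $1_K$: since $1_K(k)=1$ exactly for $k\in\{-K,\dots,K\}$ and vanishes otherwise, the inverse transform evaluated at $n$ collapses to $\sum_{k=-K}^{K}e^{2\pi j kn/N}$, which is precisely $D[n]$ by definition \eqref{DrichletKernel} (up to the $1/\sqrt N$ normalization carried by the convention fixed earlier for $F^{-1}$).

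The computation is entirely routine, so I do not anticipate a genuine obstacle; the only points requiring care are the case split itself and the normalization bookkeeping. The closed form is valid precisely where the denominator $\sin(\pi n/N)$ does not vanish, which is exactly the constraint $n\not\equiv 0\pmod N$, and the value at $n\equiv 0$ must be recovered by a separate (trivial) count of the $2K+1$ unit summands. Thus the essential content is verifying the nonvanishing of the denominator in the generic case and the direct evaluation at the excluded point.
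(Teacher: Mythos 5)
Your proof is correct and follows essentially the same route as the paper: both evaluate the symmetric finite geometric series $\sum_{k=-K}^{K} e^{2\pi j kn/N}$ in closed form (the paper simply cites it as a ``well-known finite geometric series'') and handle $n\equiv 0$ by counting the $2K+1$ unit summands. You are in fact slightly more careful than the paper on the final claim, since under the paper's unitary convention $F^{-1}(1_K)$ carries a factor of $1/\sqrt{N}$ that the identity $D=F^{-1}(1_K)$ silently drops --- a normalization discrepancy your parenthetical correctly flags, whereas the paper's one-line justification does not.
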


  \begin{proof}
      Notice that   the sum in the definition of $D$  in \eqref{DrichletKernel}  is a well-known finite geometric series which yields 
      \begin{equation} 
 \sum_{k=-K}^K   \, e^{2\pi j  \frac{k n}{N}}   = \frac{\sin\!\Bigl((2K+1)\pi \frac{n}{N}\Bigr)}{\sin\!\Bigl(\pi \frac{n}{N}\Bigr)}, \quad n\neq 0,  
\end{equation}
with the sum equal to $2K+1=2NW$ when $n=0$.

The proof of $D= F^{-1}(1_K)$ follows from the definition of the inverse Fourier transform. 
 
  \end{proof}

 {\it Notation:} 
 We call $W$ \textit{bandwidth}, hence we write  $D_W$ to emphasize the dependence of the Dirichlet kernel on $W$. 

\medskip

Now we are ready to define bandlimited operators. First, 
we define them in 1D  case. 
 When \( d = 1 \), then   \(\mathcal{I}_B = \{-K, \ldots, K\}\), and  the bandlimiting operator \(B_{K}\) is defined by
\[
B_{K} = F_{N}^{-1}\, T_{K}\, F_{N},
\]
where \(N\) is the dimension of the Fourier space (as defined above),
  \(K< N\) such that $2K+1< N$,
 \(F_{N}\) denotes the Fourier transform on \(\mathbb{C}^{N}\), and
 \(T_{K}\) is the one-dimensional time-limiting operator with  the frequency concentration domain  $\{-K, \cdots, K\}$.

\medskip 

\begin{lemma}\label{convolution}
    The band-limiting operator $B_K$ with the frequency concenteration domain $\{-K, \cdots, K\}$ and the bandwidth $W=\frac{2K+1}{2N}$  is given   
      as a convolution  map by 

$$
B_{K}(x) = x\ast D_W.  
$$

 % where  $D_W$ is the Drichlet kernel with $W=\frac{2K+1}{2N}$. 
\end{lemma}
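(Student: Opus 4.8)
The plan is to unwind the definition $B_K = F_N^{-1}\,T_K\,F_N$ and compute the action of $B_K$ on an arbitrary $x\in\mathbb{C}^N$ entrywise, so that the resulting expression is manifestly a circular convolution against the Dirichlet kernel $D_W$. First I would apply $F_N$ to obtain $\widehat{x}$, then observe that the frequency-limiting operator $T_K$ acts on $\widehat{x}$ simply as pointwise multiplication by the indicator $1_K$ of the band $\{-K,\dots,K\}$, i.e.\ $T_K F_N(x) = 1_K\cdot\widehat{x}$. Applying $F_N^{-1}$ and inserting the explicit DFT/IDFT formulas gives
\[
B_K(x)(n) = \frac{1}{\sqrt{N}} \sum_{k=-K}^{K} \widehat{x}(k)\, e^{j 2\pi k n/N} = \frac{1}{N} \sum_{m=0}^{N-1} x(m) \sum_{k=-K}^{K} e^{j 2\pi k (n-m)/N},
\]
after substituting the definition of $\widehat{x}(k)$ and interchanging the two finite sums.

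The key step is then to recognize the inner sum over $k$ as the Dirichlet kernel evaluated at $n-m$: by the definition \eqref{DrichletKernel} it equals $D[n-m]$ exactly, with the argument read modulo $N$ since both the exponentials and $D$ are $N$-periodic. Hence $B_K(x)(n) = \frac{1}{N}\sum_{m} x(m)\, D_W(n-m)$, which is precisely the (periodic) convolution $x\ast D_W$. An equivalent and more conceptual route, which I would mention, is to invoke Lemma~\ref{Drichlet-as-INT}: it already identifies $\widehat{D_W}=1_K$, so that $1_K\cdot\widehat{x} = \widehat{D_W}\cdot\widehat{x}$, and the convolution theorem for the DFT converts this pointwise product in the frequency domain into a convolution in the spatial domain, yielding $B_K(x)=x\ast D_W$ upon applying $F_N^{-1}$.

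The only genuine subtlety — to be stated carefully rather than a real obstacle — is the bookkeeping of the normalization constant and the precise convention for circular convolution. With the symmetric $1/\sqrt{N}$ DFT used throughout the paper, the two factors of $\sqrt{N}$ coming from the forward and inverse transforms combine into the $1/N$ prefactor seen above, so that $x\ast D_W$ must be read as the $N$-periodic convolution $(x\ast D_W)(n)=\frac1N\sum_{m=0}^{N-1}x(m)\,D_W(n-m)$ for the stated identity to hold verbatim; I would fix this convention explicitly at the outset. The $N$-periodicity of both $x$ and $D_W$ (already noted for the DFT earlier in the excerpt) is exactly what makes the index $n-m$ well defined modulo $N$, closing the argument.
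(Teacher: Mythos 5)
Your proposal is correct and follows essentially the same route as the paper's proof: both unwind $B_K = F_N^{-1}\,T_K\,F_N$ entrywise, interchange the two finite sums, and identify the inner exponential sum as $D_W = F^{-1}(1_K)$ via Lemma~\ref{Drichlet-as-INT}. Your explicit attention to the $1/N$ normalization in the circular convolution is in fact slightly more careful than the paper's own write-up, which glosses over the $1/\sqrt{N}$ versus $1/N$ bookkeeping.
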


\begin{proof}  Let $x$ be a signal defined as follows:
\begin{align}
   x^T &= \begin{bmatrix}
           x(0) & x(1) & \cdots & x(N-1)
         \end{bmatrix}
\end{align}
% \begin{align}
%    x &= \begin{bmatrix}
%            x(0) \\
%            x(1) \\
%            \vdots \\
%            x_(N-1)
%          \end{bmatrix}
%   \end{align}
Observe that for a fixed position $j\in [N]$, the following is true 
\begin{align*}
    [B_Kx](j)&=\frac{1}{N}\sum_{l=0}^{N-1}\sum_{m\in K}\omega^{m(j-l)}x(l)\\
    &=\sum_{l=0}^{N-1}\left[\frac{1}{N}\sum_{m=0}^{N-1}1_K(m)\omega^{m(j-l)}\right]x(l)\\
    &=[x\ast F^{-1}(1_K)](j), 
\end{align*}
where the symbol $\ast$ denotes discrete convolution, and $\omega$ is the $N$-th root of unity.   Therefore, by Lemma \ref{Drichlet-as-INT}, we may express the band-limiting operator as a convolution:
\[
B_K x = x \ast D_W,
 \qquad
D_W = F^{-1}(\mathbf{1}_K),
\]
where \(\mathbf{1}_K\) denotes the indicator function of the frequency band \(K\). This completes the proof.
\end{proof}

\begin{lemma}\label{projection} Let $K$ be as above. The operator $B_{K}$ is a projection map onto   $B(K)$, the space of $K$-band-limited signals with frequency band $W=\frac{2K+1}{2N}$. 
\end{lemma}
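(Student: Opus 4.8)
The plan is to read off the three defining properties of a projection directly from the factorization $B_K = F_N^{-1} T_K F_N$, using the facts already established for the coordinate truncation $T_K$ together with the unitarity of the DFT. Here $T_K$ denotes the truncation to the frequency indices $\{-K,\dots,K\}$, which is an orthogonal projection by the frequency-domain instance of Lemma \ref{timelimitopertorisaprojection}; in particular $T_K^2 = T_K$ and $T_K^* = T_K$. Since $F_N$ is unitary we have $F_N^{-1} = F_N^*$ and $F_N F_N^{-1} = I$.

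First I would verify idempotence. Inserting $F_N F_N^{-1} = I$ in the middle gives
\[
B_K^2 = F_N^{-1} T_K \,(F_N F_N^{-1})\, T_K F_N = F_N^{-1} T_K^2 F_N = F_N^{-1} T_K F_N = B_K .
\]
Next I would check self-adjointness: using $(F_N^{-1})^* = F_N$ and $T_K^* = T_K$,
\[
B_K^* = F_N^* \, T_K^* \, (F_N^{-1})^* = F_N^{-1} T_K F_N = B_K ,
\]
so $B_K$ is in fact an orthogonal projection. This is consistent with the convolution description $B_K x = x \ast D_W$ of Lemma \ref{convolution}.

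Finally I would identify the range with $B(K)$. If $x \in B(K)$, then by definition $\widehat{x}$ is supported in $\{-K,\dots,K\}$, i.e.\ $T_K F_N x = F_N x$, whence $B_K x = F_N^{-1} T_K F_N x = F_N^{-1} F_N x = x$; thus $B_K$ fixes $B(K)$ and $B(K) \subseteq \operatorname{Range}(B_K)$. Conversely, for arbitrary $x \in \mathbb{C}^N$ the transform of $B_K x$ is $F_N B_K x = T_K F_N x = T_K \widehat{x}$, which vanishes outside $\{-K,\dots,K\}$, so $B_K x \in B(K)$ and $\operatorname{Range}(B_K) \subseteq B(K)$. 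Combining the two inclusions yields $\operatorname{Range}(B_K) = B(K)$, completing the identification.

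The computation is entirely routine, and there is no substantial obstacle. The one point that must be stated carefully is that $T_K$ really acts as a coordinate projection in the \emph{frequency} variable — that is, it is the same operator analyzed in Lemma \ref{timelimitopertorisaprojection}, but read in the Fourier coordinates — since this is exactly what licenses the identities $T_K^2 = T_K$ and $T_K^* = T_K$ used above, and hence what makes $B_K$ inherit idempotence and self-adjointness under conjugation by the unitary $F_N$.
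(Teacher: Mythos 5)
Your proof is correct, and it takes a somewhat different (and tighter) route than the paper's. The paper proves idempotence through the convolution representation of Lemma \ref{convolution}: it computes $B_K^2 x = (x * D_W) * D_W = F^{-1}\bigl(\hat{x}\,\mathbf{1}_K^2\bigr) = F^{-1}\bigl(\hat{x}\,\mathbf{1}_K\bigr) = B_K x$, so its key input is the multiplier identity $\mathbf{1}_K^2 = \mathbf{1}_K$, and it then dismisses self-adjointness as trivial ``since $B_K$ is a composition of symmetric maps.'' You instead work directly from the definition $B_K = F_N^{-1} T_K F_N$, using unitarity of $F_N$ and the fact that the coordinate truncation $T_K$ is an orthogonal projection (Lemma \ref{timelimitopertorisaprojection} read in frequency coordinates). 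The two arguments are two faces of the same underlying fact — conjugating a frequency truncation by the unitary DFT — but your presentation buys two things. First, your self-adjointness step is airtight where the paper's is loose: a product of self-adjoint (or symmetric) maps is in general \emph{not} self-adjoint, and what actually forces $B_K^* = B_K$ is precisely the sandwich structure $U^* P U$ with $U$ unitary and $P = P^*$, which is exactly what you write; this matters downstream, since Theorem \ref{A-is-selfadjoint} invokes this lemma to get orthogonality. Second, you verify both inclusions showing $\operatorname{Range}(B_K) = B(K)$, which is what the word ``onto'' in the statement demands; the paper's proof only checks idempotence (starting, oddly, from $x \in B(K)$, a hypothesis its computation never actually uses) and never identifies the range. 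What the paper's route buys in exchange is an early workout of the Dirichlet kernel $D_W$ and the convolution picture, which it reuses later for the Toeplitz structure of the prolate matrix (Lemma \ref{lem:kernel}); but for the present lemma that machinery is not needed, and your argument is the more economical and complete one.
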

\begin{proof}
    Let $x\in B(K)$  and $B_K$ described as above. Then by Lemma \ref{convolution}
   \begin{align*}
B_K^2 x &= B_K(x * D_W) = (x * D_W) * D_W \\[2pt]
        &= F^{-1}\!\bigl(\hat{x}\,\mathbf{1}_K^2\bigr)
         = F^{-1}\!\bigl(\hat{x}\,\mathbf{1}_K\bigr) \\[2pt]
        &= x * D_W = B_K x.
\end{align*}

    The symmetry property $B_K$ is trivial, as   it is a composition of  symmetric maps $F, F^{-1}$ and $T_N$. This completes the proof. 
\end{proof}

The definition of the Dirichlet kernel can be extended to higher dimensions as follows: 
\medskip 

Let $N_i$, $1\leq i\leq d$ be   non-negative integer numbers. Let $K_i<N_i$ such that $2K_i+1< N_i$. 
Define a multi-indexing set $\cI_B$ as follows:  
\begin{align}\label{frequencygrid2}
\mathcal{I}_B = \{ -K_1, \ldots, K_1 \} \times \cdots \times \{ -K_d, \ldots, K_d \}, 
\end{align}
Denote ${\bf W} = (W_1, \cdots, W_d)$, and ${\bf K}= (K_1, \cdots, K_d)$. 
The {\it multi-dimentional Dirichlet kernel} is defined by 
\!
\begin{align}\label{Drichlet-D}
    D_{\bf W}({\bf n}) = \prod_{i=1}^d \frac{\sin(2W_i\pi n_i)}{\sin(\pi \frac{n_i}{N_i})},  \qquad {\bf n}\neq 0, 
\end{align}
\!
 and $D_\textbf{W}(\textbf{0})=2^d \prod_{i=1}^d W_iN_i$. 

 \medskip

Now we are ready to define bandlimited operators.

\begin{definition}[Band-limiting operators]\label{HD-FL} 
For given $N_i$,  $K_i$, and $W_i$ as above, we define  the     band-limiting operator  \( B_{\bf K} : \mathbb{C}^{N_1 \times \cdots \times N_d} \to \mathbb{C}^{N_1 \times \cdots \times N_d} \) as a convolution map 
 by 
  \! 
\begin{align}\label{HD-BLT}
B_{\bf K}(x) = x\ast D_{\bf W}. 
\end{align}
    
\end{definition}

One can easily verify that, due to the separable property of the tensor product of the  one dimensional Fourier transforms and Lemma \ref{Drichlet-as-INT}, the multidimensional Dirichlet kernel satisfies
\[
D_{\mathbf{W}} = \mathbf{F}^{-1}( \mathbf{1}_{\mathbf{K}} ),
\]
where \( \mathbf{1}_{\mathbf{K}} \) denotes the indicator function of the frequency set defined in~\eqref{frequencygrid2}. That is,
\[
\mathbf{1}_{\mathbf{K}}(\mathbf{n}) = 
\begin{cases}
1, & \text{if } \mathbf{n} \in \mathcal{I}_B, \\
0, & \text{otherwise},
\end{cases}
\]
and  \( \mathcal{I}_B \) is  the bandlimiting index set defined in \eqref{frequencygrid2}.

Therefore, the projection operator onto the space of \(\mathbf{K}\)-bandlimited signals can be written as
\begin{align*} 
B_{\mathbf{K}} = \mathbf{F}^{-1} T_{\mathbf{K}} \mathbf{F},
\end{align*}
where \( T_{\mathbf{K}} \) is the frequency-domain-limiting operator (a diagonal matrix) that zeroes out all DFT coefficients outside \( \mathcal{I}_B \) and retains those inside.

\medskip 

The map $B_{\bf K}$ (using the definition of the convolution) is then given by 
\! 
\[
(B_{\bf K}x)[\mathbf{n}] = \frac{1}{N_1 \cdots N_d} \sum_{\mathbf{m}\in \mathcal{I}} x[\mathbf{m}]  
D_{\textbf{W}}(\textbf{n-m})
= 
\frac{1}{N_1 \cdots N_d} \sum_{\mathbf{m}\in \mathcal{I}} x[\mathbf{m}] \prod_{i=1}^d 
D_{W_i}(n_i-m_i)
% \frac{\sin\!\Bigl((2K_i+1)\pi \frac{(n_i-m_i)}{N_i}\Bigr)}{\sin\!\Bigl(\pi \frac{(n_i-m_i)}{N_i}\Bigr)},
%
\qquad {\bf n}\in \cI.
\]

Therefore, we have established the following result, which highlights the tensor product structure of the operator $B_{\bf K}$: 
 
\begin{lemma}  Let 
$B_{\bf K}$ be the band-limiting operator defined   in \eqref{HD-BLT}. Then 
\!
  $$(B_{\bf K} x)[{\bf n}] = 
    \prod_{i=1}^d  (B_{K_i})(n_i).$$
\end{lemma}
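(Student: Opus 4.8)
The plan is to verify the factorization first on \emph{separable} (pure-tensor) signals and then extend to all of $\mathbb{C}^{N_1\times\cdots\times N_d}$ by multilinearity, using the canonical isomorphism $\mathbb{C}^{N_1\times\cdots\times N_d}\cong\mathbb{C}^{N_1}\otimes\cdots\otimes\mathbb{C}^{N_d}$ established in Appendix~\ref{appendix:proof-isomorphism}. The essential ingredient is the product structure of the multidimensional Dirichlet kernel, namely $D_{\mathbf{W}}(\mathbf{n})=\prod_{i=1}^d D_{W_i}(n_i)$ from \eqref{Drichlet-D}, together with the factorization of the normalization constant $\tfrac{1}{N_1\cdots N_d}=\prod_{i=1}^d\tfrac1{N_i}$.

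Concretely, I would take a separable signal $x=x_1\otimes\cdots\otimes x_d$, so that $x[\mathbf{m}]=\prod_{i=1}^d x_i(m_i)$, and substitute into the convolution formula for $B_{\mathbf{K}}$ displayed just above the lemma. After inserting $D_{\mathbf{W}}(\mathbf{n}-\mathbf{m})=\prod_{i=1}^d D_{W_i}(n_i-m_i)$, every summand becomes a product indexed by $i$, so the $d$-fold sum over $\mathbf{m}\in\mathcal{I}$ splits as a product of $d$ independent single-variable sums:
\[
(B_{\mathbf{K}}x)[\mathbf{n}]
=\prod_{i=1}^d\left(\frac1{N_i}\sum_{m_i=0}^{N_i-1}x_i(m_i)\,D_{W_i}(n_i-m_i)\right).
\]
By Lemma~\ref{convolution}, the $i$-th factor is exactly $(B_{K_i}x_i)(n_i)$, which yields the claimed pointwise identity $(B_{\mathbf{K}}x)[\mathbf{n}]=\prod_{i=1}^d (B_{K_i}x_i)(n_i)$ on pure tensors.

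Since the pure tensors $x_1\otimes\cdots\otimes x_d$ span $\mathbb{C}^{N_1}\otimes\cdots\otimes\mathbb{C}^{N_d}$ and both sides of the identity are linear in $x$, the equality extends to arbitrary $x$; equivalently, at the operator level this says $B_{\mathbf{K}}=B_{K_1}\otimes\cdots\otimes B_{K_d}$ in the sense of \eqref{tensor-product-operators}, mirroring the factorization \eqref{time-limiting-op-HD} for the domain-limiting operator. I do not anticipate a genuine obstacle here: the argument is a direct consequence of separability, and the only points requiring care are matching the normalization constants across the factorization and invoking the canonical isomorphism so that the pointwise identity on pure tensors legitimately extends by bilinearity to the full signal space.
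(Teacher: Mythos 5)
Your proposal is correct and takes essentially the same approach as the paper: the paper treats this lemma as an immediate consequence of the convolution formula displayed just above it, in which the multidimensional Dirichlet kernel factors as $\prod_{i=1}^d D_{W_i}(n_i-m_i)$, which is precisely the separability you exploit. Your extra care in first proving the identity on pure tensors $x_1\otimes\cdots\otimes x_d$ and then extending by linearity to the operator identity $B_{\mathbf{K}}=B_{K_1}\otimes\cdots\otimes B_{K_d}$ is a welcome sharpening, since the product on the right-hand side of the lemma as stated only makes literal sense for separable signals.
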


\noindent {\it Notation:} Similar to the 1D case, we shall denote by $B({\bf K})$ the space of all ${\bf K}$-bandlimited signals in $\mathbb C^{N_1\times \cdots \times N_d}$.

\section{Multi-indexed  prolate matrices and their spectrums}\label{prolatematrix}

Let \( x \in \mathbb{C}^{N_1 \times \cdots \times N_d} \) be a \( d \)-dimensional signal defined on the grid 
 $\mathcal{I}$ \eqref{grid},  and let  \( T_\textbf{M} \) 
be the domain-limiting operator  which  restricts the signal to a smaller region $ 
\mathcal{I}_T$ 
\eqref{time-concentration-grid}. 
Let the band–limiting operator \( B_{\mathbf{K}} \) act by zeroing out the multidimensional Fourier coefficients outside a prescribed frequency support set
$ 
\mathcal{I}_B$ (as defined  in \eqref{frequencygrid2}).

The \emph{(multidimensional) prolate matrix}   is then defined as the composition
\[
A = T_{\mathbf{M}} \, B_{\mathbf{K}} \, T_{\mathbf{M}}.
\]
This operator acts as follows:
  \( T_{\mathbf{M}} \) first truncates the signal \( x \) to the spatial region \(\mathcal{I}_T\). The operator 
   \( B_{\mathbf{K}} \) then bandlimits the truncated signal by retaining only the DFT coefficients with frequencies in \(\mathcal{I}_B\).
  Finally, the second application of \( T_{\mathbf{M}} \) ensures that the resulting signal is confined to the same spatial region \(\mathcal{I}_T\).

The following result proves that the prolate matrix is a Toeplitz matrix, meaning that there exists a kernel function \(K\) such that its entries satisfy
\[
A_{\mathbf{m},\mathbf{n}} = K(\mathbf{m}-\mathbf{n}) =K(\mathbf{n}-\mathbf{m}) .
\]

 \begin{lemma}\label{lem:kernel}  The prolate matrix \(A\) is a Toeplitz matrix with kernel
\[
K: \mathbb{C}^{M_1 \times \cdots \times M_d} \to \mathbb{R},
\]
defined by
\!
\begin{align}\label{kernel}
K(\mathbf{m}) = \prod_{i=1}^d \frac{D_{W_i}(m_i)}{N_i}   \quad \mathbf{m} \neq \mathbf{0},
\end{align}

% \[
% K(\mathbf{m}) = \prod_{i=1}^d \frac{\sin\!\Bigl((2K_i+1)\pi \frac{m_i}{N_i}\Bigr)}{N_i \sin\!\Bigl(\pi \frac{m_i}{N_i}\Bigr)}, \quad \mathbf{m} \neq \mathbf{0},
% \]
and
 $ 
K(\mathbf{0}) =  2^d \prod_{i=1}^d W_i$, \; $W_i= \frac{2K_i+1}{2N_i}$.
 
% \begin{align}\label{Kernel2}
% [B_{\mathbf{M},\mathbf{K}}]_{\mathbf{m},\mathbf{n}}:= [B_{\mathbf{M},\mathbf{K}}](\mathbf{m}-\mathbf{n})  =    \prod_{i=1}^d \frac{\sin\!\Bigl((2K_i+1)\pi \frac{(n_i-m_i)}{N_i}\Bigr)}{N_i\sin\!\Bigl(\pi \frac{(n_i-m_i)}{N_i}\Bigr)}\qquad \forall \;   \mathbf{m},\mathbf{n} \in \mathcal{I}_T  
% \end{align}
\end{lemma}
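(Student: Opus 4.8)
The plan is to compute the entries of $A$ directly in the standard basis and read off their dependence on $\mathbf{m}-\mathbf{n}$. Writing $\{e_{\mathbf{n}}\}_{\mathbf{n}\in\mathcal{I}}$ for the standard basis of $\mathbb{C}^{N_1\times\cdots\times N_d}$, the entries are $A_{\mathbf{m},\mathbf{n}}=(A e_{\mathbf{n}})[\mathbf{m}]$. Because $A=T_{\mathbf{M}}B_{\mathbf{K}}T_{\mathbf{M}}$ begins and ends with the projection $T_{\mathbf{M}}$ onto $\mathcal{I}_T$, the entry vanishes unless both $\mathbf{m}$ and $\mathbf{n}$ lie in $\mathcal{I}_T$; so it suffices to treat $\mathbf{m},\mathbf{n}\in\mathcal{I}_T$.

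First I would trace a basis vector through the three factors. For $\mathbf{n}\in\mathcal{I}_T$ one has $T_{\mathbf{M}}e_{\mathbf{n}}=e_{\mathbf{n}}$ by the definition of the domain-limiting operator. Applying the band-limiting operator in its convolution form \eqref{HD-BLT}, $B_{\mathbf{K}}e_{\mathbf{n}}=e_{\mathbf{n}}\ast D_{\mathbf{W}}$, and evaluating at $\mathbf{m}$ gives $(B_{\mathbf{K}}e_{\mathbf{n}})[\mathbf{m}]=\frac{1}{N_1\cdots N_d}D_{\mathbf{W}}(\mathbf{m}-\mathbf{n})$, since convolution against a unit impulse at $\mathbf{n}$ simply translates the kernel. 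The final $T_{\mathbf{M}}$ only restricts the output index to $\mathcal{I}_T$ and therefore leaves this value unchanged for $\mathbf{m}\in\mathcal{I}_T$. Thus $A_{\mathbf{m},\mathbf{n}}=\frac{1}{N_1\cdots N_d}D_{\mathbf{W}}(\mathbf{m}-\mathbf{n})$.

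Next I would insert the separable form of the multidimensional Dirichlet kernel from \eqref{Drichlet-D}, namely $D_{\mathbf{W}}(\mathbf{m}-\mathbf{n})=\prod_{i=1}^d D_{W_i}(m_i-n_i)$ (with the value at $\mathbf{0}$ given there). Distributing the normalization $1/(N_1\cdots N_d)=\prod_i 1/N_i$ across the product yields $A_{\mathbf{m},\mathbf{n}}=\prod_{i=1}^d D_{W_i}(m_i-n_i)/N_i$, which depends on $\mathbf{m}$ and $\mathbf{n}$ only through the difference $\mathbf{m}-\mathbf{n}$. Setting $K(\mathbf{p})=\prod_i D_{W_i}(p_i)/N_i$ for $\mathbf{p}\neq\mathbf{0}$ and $K(\mathbf{0})=2^d\prod_i W_i$ (using $D_{W_i}(0)=2W_iN_i$ from Lemma~\ref{Drichlet-as-INT}, so that $\prod_i 2W_iN_i/N_i=2^d\prod_i W_i$) reproduces exactly \eqref{kernel} and establishes the Toeplitz structure.

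Finally, to obtain the symmetry $K(\mathbf{m}-\mathbf{n})=K(\mathbf{n}-\mathbf{m})$ and the fact that $K$ is real, I would invoke the evenness of each one-dimensional Dirichlet kernel: from \eqref{DrichletKernel}, $D_{W_i}(-p)=\sum_{k=-K_i}^{K_i}e^{-2\pi j k p/N_i}=D_{W_i}(p)$ after reindexing $k\mapsto -k$ over the symmetric index set, and the same pairing writes $D_{W_i}(p)=1+2\sum_{k=1}^{K_i}\cos(2\pi k p/N_i)\in\mathbb{R}$. Since $K$ is a product of even, real factors, both claimed properties follow. The computation is entirely routine; the only points demanding care are bookkeeping the cyclic (mod $N_i$) nature of the convolution—harmless because $D_{\mathbf{W}}$ is $N_i$-periodic in each coordinate, so $D_{\mathbf{W}}(\mathbf{m}-\mathbf{n})$ is well defined—and tracking the normalization constants so that the value at $\mathbf{0}$ matches.
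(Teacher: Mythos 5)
Your proposal is correct and follows essentially the same route as the paper's proof: both use the fact that the outer projections $T_{\mathbf{M}}$ restrict the entries of $B_{\mathbf{K}}$ to indices in $\mathcal{I}_T\times\mathcal{I}_T$, where those entries are the separable Dirichlet kernel evaluated at $\mathbf{m}-\mathbf{n}$, giving the Toeplitz structure. Your impulse-response computation through the convolution form \eqref{HD-BLT}, and your explicit verification that the kernel is even and real via the cosine pairing, simply spell out steps the paper treats as immediate.
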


 \begin{proof} Multiplying \(B_{\mathbf{K}}\) on the right by \(T_{\mathbf{M}}\) retains only the columns of \(B_{\mathbf{K}}\) corresponding to the indices in \(\mathcal{I}_T\). On the other hand, multiplying \(B_{\mathbf{K}}\) on the left by \(T_{\mathbf{M}}\) retains only the rows corresponding to the indices in \(\mathcal{I}_T\). Consequently, the resulting matrix is zero everywhere except for entries with both row and column indices in \(\mathcal{I}_T\); in those positions, the entries are given by 
 $ 
B_{\bf K} (\mathbf{m},\mathbf{n})
$, the $(\mathbf{m},\mathbf{n})$-entry of the matrix $B_{\bf K}$. Now  define  the square matrix  $B_{\bf M,K}$,   of size $\cI_{T}\times \cI_{T}$,  such that 
 \! 
\begin{align}\label{Kernel2} [B_{\bf M,W}]_{\bf m,n}: = 
 \prod_{i=1}^d \frac{\sin\!\Bigl((2K_i+1)\pi \frac{(n_i-m_i)}{N_i}\Bigr)}{N_i\sin\!\Bigl(\pi \frac{(n_i-m_i)}{N_i}\Bigr)}\qquad \forall \;   \mathbf{m},\mathbf{n} \in \mathcal{I}_T, \, m_i\neq n_i.
\end{align}
(Notice, for $n, m\in [0,M]$, we   have $n-m\in [0,M]$.)  
Note that for any component where $m_i = n_i$, we define the corresponding term in the product to be the time-bandwidth product $2W_iN_i$. (Notice, for $n, m \in [0,M]$, we have $n - m \in [0,M]$.)

It is clear to see that $A_{\bf m,n} =   [B_{\bf M,K}]_{\bf m,n}= K({\bf n}- {\bf m}) =  K({\bf m}- {\bf n})$, where $K$ is given as in
\eqref{kernel}, and 
this completes the  proof. 
 \end{proof}

The kernel in Lemma  \ref{lem:kernel}    is the Drichelet kernel in  the \( d \)-dimensional setting. Moreover, 
for any $n\in \cI_T$
     $$ (Ax)[{\bf n}] =     \sum_{\mathbf{m}\in \mathcal{I}_T } x[\mathbf{m}] K(\mathbf{m}-\mathbf{n})
     $$   
     \! 
 Notice, we have 
$$
A = \otimes_{i=1}^dT_{M_i}B_{K_i} T_{M_i}
$$
The following lemma shows that the  multidimensional prolate matrix  
inherits several important structural and spectral properties from its one-dimensional counterpart. 
\! 
\begin{lemma}\label{spectrum-of-tensors}
     Let $A_i= T_{M_i}B_{K_i}T_{M_i}$, and 
\( 
A \;=\;\otimes_{i=1}^d  A_i.
\)
For each \(i\), denote the spectrum by
\[
\operatorname{Spec}\bigl(A_i\bigr)
=\{\lambda_{i}^{(1)},\dots,\lambda_{i}^{(d_i)}\}.
\]
Then the spectrum of \(A\) is
\[
\operatorname{Spec}(A)
=\Bigl\{\prod_{i=1}^n \lambda_{i}^{(j_i)}
   \;\Bigm|\;
   1\le j_i\le M_i
\Bigr\}.
\]
\end{lemma}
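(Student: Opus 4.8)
The plan is to reduce the assertion to the spectral theorem applied to each factor separately, exploiting the multiplicativity of the tensor product for both operators and inner products. Throughout I regard each $A_i = T_{M_i} B_{K_i} T_{M_i}$ as the one-dimensional prolate matrix of Lemma \ref{lem:kernel}, acting on the $M_i$-dimensional signal space, so that $A = \otimes_{i=1}^d A_i$ acts on $\mathbb{C}^{M_1 \times \cdots \times M_d} \cong \bigotimes_{i=1}^d \mathbb{C}^{M_i}$. The eigenvalues of each $A_i$ will be taken with multiplicity, so that listing them as $\lambda_i^{(1)},\dots,\lambda_i^{(M_i)}$ is consistent with the index range $1\le j_i\le M_i$ appearing in the claim.

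First I would verify that each $A_i$ is Hermitian and positive semidefinite. Since $T_{M_i}$ and $B_{K_i}$ are orthogonal projections (Lemmas \ref{timelimitopertorisaprojection} and \ref{projection}), we have $A_i^* = T_{M_i} B_{K_i} T_{M_i} = A_i$, and $\langle A_i x, x\rangle = \|B_{K_i} T_{M_i} x\|^2 \ge 0$. By the spectral theorem, $\mathbb{C}^{M_i}$ therefore admits an orthonormal basis $\{v_i^{(1)}, \dots, v_i^{(M_i)}\}$ of eigenvectors, with $A_i v_i^{(j_i)} = \lambda_i^{(j_i)} v_i^{(j_i)}$. I would then form the elementary tensors $v^{(\mathbf{j})} := v_1^{(j_1)} \otimes \cdots \otimes v_d^{(j_d)}$ indexed by multi-indices $\mathbf{j} = (j_1, \dots, j_d)$ with $1 \le j_i \le M_i$, and apply the composition rule \eqref{composition-of-tensored-op} together with the action \eqref{tensor-product-operators} on pure tensors to obtain
\[
A v^{(\mathbf{j})} = \bigl(A_1 v_1^{(j_1)}\bigr) \otimes \cdots \otimes \bigl(A_d v_d^{(j_d)}\bigr) = \Bigl(\prod_{i=1}^d \lambda_i^{(j_i)}\Bigr)\, v^{(\mathbf{j})}.
\]
Hence every product $\prod_{i=1}^d \lambda_i^{(j_i)}$ is an eigenvalue of $A$, which gives the inclusion of the product set into $\operatorname{Spec}(A)$.

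The one point that requires care is the reverse inclusion, namely that $A$ has no eigenvalues beyond these products; I would settle it by a dimension count. Using the inner-product rule for elementary tensors recorded earlier, $\langle v^{(\mathbf{j})}, v^{(\mathbf{j}')}\rangle = \prod_{i=1}^d \langle v_i^{(j_i)}, v_i^{(j_i')}\rangle_i = \prod_{i=1}^d \delta_{j_i, j_i'}$, so the family $\{v^{(\mathbf{j})}\}$ is orthonormal. Since it contains exactly $\prod_{i=1}^d M_i = \dim\bigl(\mathbb{C}^{M_1 \times \cdots \times M_d}\bigr)$ members, it is an orthonormal basis of the entire space, and $A$ is diagonal with respect to it. Consequently every eigenvalue of $A$ must equal $\prod_{i=1}^d \lambda_i^{(j_i)}$ for some $\mathbf{j}$, and $\operatorname{Spec}(A)$ is precisely the set of such products, as asserted.

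I expect the main (indeed only) nontrivial step to be this completeness argument: producing the product eigenvalues is immediate from the tensor action, but ruling out extraneous eigenvalues relies on the tensor eigenvectors spanning the whole space, which the orthonormality-plus-dimension argument secures. As a remark, the same identity holds if one instead views each $A_i$ as acting on all of $\mathbb{C}^{N_i}$: adjoining the trivial zero eigenvalues (of multiplicity $N_i - M_i$) to each factor spectrum only adjoins further zeros to the product set, leaving the set-theoretic equality unchanged.
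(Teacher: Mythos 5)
Your proof is correct and follows essentially the same route as the paper: form elementary tensors of one-dimensional eigenvectors, use the mixed-product property to see that they are eigenvectors of $A$ with the product eigenvalues, and conclude by a dimension count that they exhaust the spectrum. The only difference is that where the paper simply asserts the linear independence of the $\prod_{i=1}^d M_i$ tensor eigenvectors, you justify it by choosing orthonormal eigenbases (available since each $A_i$ is Hermitian) and verifying orthonormality of the elementary tensors via the product inner-product rule --- a welcome tightening of the same argument.
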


\begin{proof} 
 For each \(i\) choose an eigenbasis 
    \[
A_i\,v_{i,j_i} = \lambda_{i}^{(j_i)}\,v_{i,j_i},
\qquad j_i = 1,\dots,d_i.\]

 For each multi‑index \((j_1,\dots,j_n)\), set
 
\[
v_{j_1,\dots,j_n}
:= v_{1,j_1}\otimes v_{2,j_2}\otimes\cdots\otimes v_{n,j_n}.
\]

Then by the mixed–product property of the tensor product,

\[
\Bigl(\otimes_{i=1}^n A_i\Bigr)\,v_{j_1,\dots,j_n}
= (A_1 v_{1,j_1})\otimes\cdots\otimes(A_n v_{n,j_n})
= (\lambda_{1}^{(j_1)}\cdots\lambda_{n}^{(j_n)})\,v_{j_1\dots,j_n}.
\]
Since there are \(\prod_{i=1}^n d_i\) of these tensors and they are linearly independent, they form an eigenbasis of \(\otimes_{i=1}^n A_i\). Therefore, 
\[
\operatorname{Spec}\Bigl(\otimes_{i=1}^n A_i\Bigr)
=\Bigl\{\lambda_{1}^{(j_1)}\cdots\lambda_{n}^{(j_n)}
  \;\Bigm|\;
  1\le j_i\le M_i\Bigr\},
\]
as claimed. 
\end{proof}

 % \subsection*{Spectral properties of the prolate matrix}
  
% Below, we discuss   basic spectral properties of $A$.

\begin{theorem}\label{A-is-selfadjoint}  
 The  multidimensional prolate matrix 
 $ A = T_{\mathbf{M}} B_{\mathbf{K}} T_{\mathbf{M}}
 $ 
is self-adjoint (Hermitian) and positive definite with positive  eigenvalues.  Moreover, all eigenvalues of $A$ are positive and   
lie in the interval \((0, 1)\).   
\end{theorem}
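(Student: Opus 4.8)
The plan is to build everything from the observation that $A$ is sandwiched between two orthogonal projections, and then to upgrade the weak spectral bounds to the strict ones by descending to the one-dimensional factors through the tensor decomposition of Lemma~\ref{spectrum-of-tensors}. For self-adjointness, recall that by Lemma~\ref{timelimitopertorisaprojection} (and the remark extending it to $T_{\mathbf M}$) together with Lemma~\ref{projection} (in its higher-dimensional form), both $T_{\mathbf M}$ and $B_{\mathbf K}$ are orthogonal projections, so $T_{\mathbf M}^{*}=T_{\mathbf M}$ and $B_{\mathbf K}^{*}=B_{\mathbf K}$. Taking adjoints and reversing the order of composition gives
\[
A^{*}=(T_{\mathbf M}B_{\mathbf K}T_{\mathbf M})^{*}=T_{\mathbf M}^{*}B_{\mathbf K}^{*}T_{\mathbf M}^{*}=T_{\mathbf M}B_{\mathbf K}T_{\mathbf M}=A,
\]
so $A$ is Hermitian and its spectrum is real.

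Next I would read off the bounds $0\le\lambda\le1$ from a single quadratic-form identity. For an arbitrary $x$, using $T_{\mathbf M}^{*}=T_{\mathbf M}$ and $B_{\mathbf K}=B_{\mathbf K}^{2}=B_{\mathbf K}^{*}$,
\[
\langle Ax,x\rangle=\langle B_{\mathbf K}T_{\mathbf M}x,\,T_{\mathbf M}x\rangle=\|B_{\mathbf K}T_{\mathbf M}x\|^{2}\ge 0,
\]
which gives positive semidefiniteness, while the contractivity of projections ($\|B_{\mathbf K}\|\le1$, $\|T_{\mathbf M}\|\le1$) yields $\langle Ax,x\rangle\le\|T_{\mathbf M}x\|^{2}\le\|x\|^{2}$. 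By the Rayleigh--Ritz characterization every eigenvalue therefore lies in $[0,1]$, and the strictly positive eigenvalues singled out in the statement are precisely the nonzero part of this spectrum.

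The decisive point is the strict separation from $1$, and here I would invoke the factorization $A=\otimes_{i=1}^{d}A_i$ with $A_i=T_{M_i}B_{K_i}T_{M_i}$. By Lemma~\ref{spectrum-of-tensors} every nonzero eigenvalue of $A$ is a product $\prod_{i=1}^{d}\lambda_i^{(j_i)}$ of eigenvalues of the one-dimensional factors; since a product of numbers in $(0,1)$ stays in $(0,1)$, it suffices to treat a single factor. For $A_i$, the equality $\lambda=1$ would force $\|B_{K_i}x\|=\|x\|$ for some nonzero time-limited $x$, hence $B_{K_i}x=x$ (equality in the contraction of a projection), so that $x$ is simultaneously supported on at most $M_i$ time indices and has DFT supported on the $2K_i+1$ frequencies $\{-K_i,\dots,K_i\}$. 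Encoding $x$ by the trigonometric polynomial $p(z)=\sum_{n}x(n)z^{n}$ of degree $<M_i$ and observing that a nonzero such polynomial cannot vanish at the required number of distinct roots of unity (a Vandermonde, equivalently discrete-uncertainty, argument), this is impossible, whence $\lambda<1$.

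I expect this last step---ruling out the eigenvalue $1$---to be the main obstacle, since Hermiticity, semidefiniteness, and the bound $\le1$ are purely formal consequences of the projection structure, whereas the strict inequality genuinely reflects the fact that no nonzero signal can be at once time-limited and band-limited. I would therefore isolate it as a one-dimensional lemma and let the spectral formula of Lemma~\ref{spectrum-of-tensors} propagate it to all dimensions; the corresponding strict positivity ($\lambda>0$) is exactly the statement that one restricts to the positive part of the spectrum, which is handled by the analogous one-dimensional fact that $B_{K_i}$ does not annihilate the time-limited subspace in the relevant range of parameters.
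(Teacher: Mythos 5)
Your formal steps coincide with the paper's: Hermiticity from the projection structure (Lemmas~\ref{timelimitopertorisaprojection} and~\ref{projection}), positive semidefiniteness from $\langle Ax,x\rangle=\|B_{\mathbf K}T_{\mathbf M}x\|^{2}$, the bound $\lambda\le 1$ from contractivity, and the reduction of the strict bounds to dimension one via Lemma~\ref{spectrum-of-tensors} is also exactly how the paper proceeds. The divergence is at the step you yourself flagged as decisive: the paper disposes of the one-dimensional strict bounds by citing \cite{slepian1978-V}, whereas you attempt a self-contained root-counting argument. That attempt contains a genuine gap.

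Your Vandermonde step runs as follows: a nonzero $x$ supported in $\{0,\dots,M-1\}$ with $\widehat{x}$ supported in $\{-K,\dots,K\}$ gives a nonzero polynomial $p$ of degree $<M$ vanishing at the $N-(2K+1)$ roots of unity indexed by the out-of-band frequencies. This is a contradiction only when $N-(2K+1)\ge M$, i.e. $M+2K+1\le N$, and the paper's standing hypotheses ($M<N$, $K\le\lfloor (N-1)/2\rfloor$) do not imply this. Take $N=5$, $M=4$, $K=1$: the time-limited subspace has dimension $4$, the band-limited subspace dimension $3$, and $4+3>5$, so they intersect nontrivially; any nonzero common vector $x$ satisfies $T_{M}x=x$ and $B_{K}x=x$, hence $Ax=x$, so the eigenvalue $1$ genuinely occurs. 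Thus in this regime your argument cannot be repaired, because the conclusion itself fails; this is a finite-DFT phenomenon with the Dirichlet kernel, and it is precisely why the paper's appeal to Slepian (whose strict bounds concern the sinc-kernel DPSS matrix, where band-limiting is to an interval of positive measure in the DTFT domain and analyticity forbids common vectors) does not actually cover the matrix at hand either. A parallel unstated condition hides in your closing remark on positivity: on the time-limited subspace the operator $T_{M}B_{K}T_{M}$ has rank at most $2K+1$, so whenever $M>2K+1$ it has eigenvalue $0$, and injectivity of $B_{K}$ on that subspace requires $M\le 2K+1$ (again by counting roots of a degree-$(M-1)$ polynomial at $2K+1$ points). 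So your instinct to isolate the one-dimensional strictness as the crux was correct, but the argument goes through only under the extra hypotheses $M+2K+1\le N$ (for $\lambda<1$) and $M\le 2K+1$ (for $\lambda>0$), which should have been flagged explicitly—and which the paper's own proof-by-citation silently needs as well.
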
 
\begin{proof}
Since both the time-limiting operator \( T_{\mathbf{M}} \) and the band-limiting operator \( B_{\mathbf{K}} \) are orthogonal projections (see Lemmas \ref{timelimitopertorisaprojection} and   \ref{projection}), they are self-adjoint. Therefore, the operator \( A = T_{\mathbf{M}} B_{\mathbf{K}} T_{\mathbf{M}} \), being a composition of self-adjoint operators (with symmetry preserved), is also self-adjoint.

This symmetry implies that 
 all eigenvalues of \(A\) are real. On the other hand, note that for any vector \(x\in \mathbb C^{N_1\times \cdots \times N_d}\),
 \!
\begin{align}\label{A-is-positive}
\langle Ax, x \rangle = \|B_{\mathbf{K}} T_{\mathbf{M}} x\|^2 \ge 0.
\end{align}
Thus, \(A\) is a positive definite operator, which implies that all its eigenvalues are positive.

 Since \(B_{\mathbf{K}}\) is an orthogonal projection operator   and \(T_{\mathbf{N}}\) is a truncation map, the combined effect of these operators ensures that the eigenvalues of \(A\) (with \(A = T_\textbf{M} B_{\mathbf{K}} T_{\mathbf{M}}\)) are bounded above by 1. This is also evident from the following operator norm estimate:
\[
\langle Ax, x\rangle = \|B_{\mathbf{K}} T_{\mathbf{M}} x\|^2 \leq \|x\|^2.
\]

This inequality along with \eqref{A-is-positive}  implies that the eigenvalues of $A$ lie in $[0,1]$.

  The fact that the eigenvalues lie strictly in the interval $(0,1)$ follows from a result in \cite{slepian1978-V}: in the one‑dimensional case, the eigenvalues of the prolate matrix are nonzero and strictly less than 1. Together with Lemma \ref{spectrum-of-tensors}, this completes the proof. 
 \end{proof}

\noindent{\bf Remark:}  (Degenerate eigenvalues of multiplicity $d$.) In \cite{slepian1978-V}, it was shown that, in the one-dimensional case, the eigenvalues of the prolate matrix are non-degenerate. However, in higher dimensions, this property no longer holds in general.

To illustrate this, consider the two-dimensional case ($d = 2$). Let $A$ and $B$ be identical prolate matrices associated with parameters $K_1 = K_2$, $M_1 = M_2$, and $N_1 = N_2$. Then the tensor product matrix $A \otimes B$ defines a higher dimensional prolate matrix. In this case, each eigenvalue of $A \otimes B$ has multiplicity $2$.

This is true  because any eigenvalue $\mu$ of $A \otimes B$ is given by the product of two (multiplicity-1) eigenvalues of $A$, say $\lambda_i$ and $\lambda_j$ for $i \neq j$, i.e., $\mu = \lambda_i \lambda_j = \lambda_j \lambda_i$, (see Lemma \ref{spectrum-of-tensors}). In Fig. \ref{figure:prolate7},  we show the eigenvalues of a prolate matrix 
$A$, along with the eigenvalues of the tensor  product 
$A\otimes A$. The bold dots highlight the multiplicity-2 eigenvalues in the spectrum of the tensored matrix. 
 
This phenomenon generalizes to higher dimensions: for $d>1$, the eigenvalues of the corresponding multidimensional prolate operator  (or matrix) are  degenerate and have   multiplicity  $d$.

\vskip.1in 

\noindent{\bf Remark:}   (Dependence of eigenvalues on the  bandwidth $W$.) 
The prolate eigenvectors $\nu_k$ of the prolate matrix $A=T_MB_KT_M$ and their associated eigenvalues $\lambda_k(M,W)$ can be  considered as the solutions to  the system of equations   
\!
\begin{align}\label{eigenval-equ}
    \lambda_k(M,W) \nu_k[m] = 
    \sum_{n=0}^{M-1}   \frac{\sin\!\Bigl(2W\pi(n-m)\Bigr)}{\sin\!\Bigl(\pi \frac{(n-m)}{N}\Bigr)}  \; \nu_k[n],  
   \qquad \forall    m\in [0,M],  \end{align}

    and when $n=m$, the expression $\frac{\sin\!\Bigl(2W\pi(n-m)\Bigr)}{\sin\!\Bigl(\pi \frac{(n-m)}{N}\Bigr)}$ has the value $2WN$. 
The equation shows that for any   fixed $N$ and $M$ with $M\leq N$, the  eigenvalues $\lambda_k(M,W)$  depend only on  $W$. When $N$,  $M$, and $W$ vary so that time-bandwidth $MW$ remains fixed, Figs.  \ref{figure:prolate4-6} and    \ref{figure:prolate1} show that the eigenvalues remain  approximately unchanged.

% {\color{red} 

% We want to fix $N$ and $M$.  Then we want to show that if $W=0.2$ and $W=0.4$ changes, the eigenvalues   do not stay the same for a fixed $M$. \\

% However, if we keep $W$ fixed but change $M$, the eigenvalues do not change. 
% \\

% Experiment for fixed $N$ and fixed $2WM$
% }

\vskip.1in 

\noindent{\bf Remark:}   (Eigenvalue‐graph shape.) 
Slepian,  in \cite{slepian1978-V},  gave the approximation
  \[
    \lambda_{k}(M,W)\;\approx\;\Bigl[\,1 + \exp\!\bigl(
    \tfrac{
    -\pi^2\bigl(2MW - k - \tfrac12\bigr)}{\log(8M\sin(2\pi W)) + \gamma}\bigr)\Bigr]^{-1},
  \]
  valid for \(\lambda_k\in(0.2,0.8)\), where \(\gamma\approx0.5772\) is the Euler–Mascheroni constant.  This suggests that for fixed $M$, $N$ and $W$, the discrete spectrum of the prolate matrix 
  resembles a sigmoid function over the index range corresponding to   eigenvalues  in the range 
$(0.2,0.8)$. See, e.g., Fig. \ref{figure:prolate1}.  

In Fig. \ref{figure:prolate7}, we observe a similar sigmoid-like structure in the discrete spectrum of the tensor product of prolate matrices.   
When $k$, the index of eigenvalue, is fixed, the eigenvalue $\lambda_k$ becomes  a function of $M$ and $W$.   In Fig. \ref{figure:eigenvalues1}, we illustrate function $\lambda_k$ as a function of   the time-bandlimit $2MW$ over  the interval $(0, 400)$.

\subsubsection{Nodal set of Dirichlet kernel}
Although not directly related to the main goal of this paper, the following result is of independent interest and may be relevant in broader contexts involving discrete spectral analysis and time-frequency localization.

\begin{proposition}\label{nodal-Dirchelet}
    Let \( M, N \in \mathbb{N} \) with \( M < N \), and let \( K \in \mathbb{N} \) satisfy \( K \leq \lfloor (N-1)/2 \rfloor \). Define the normalized frequency bandwidth
\[
W := \frac{2K+1}{2N}.
\]
Consider the discrete Dirichlet-type kernel defined for \( m, n \in [0,M] := \{0, 1, \dots, M-1\} \), with \( m \neq n \), by
\[
D_W(n - m) := \frac{\sin\left(2\pi W (n - m)\right)}{\sin\left( \pi \frac{n - m}{N} \right)}.
\]
Define the nodal set of the kernel by
\[
\mathcal{Z}_W := \left\{ (m, n) \in [0,M]^2 \,\middle|\, m \neq n,\; D_W(n - m) = 0 \right\}.
\]
Then the cardinality of the nodal set satisfies the asymptotic estimate

\[
\sharp \mathcal{Z}_W = 2 (M-r) 
\]
for  some integer $r$ satisfying 
\[
\lfloor MW\rfloor -1 \leq r\leq \lfloor MW\rfloor +1.
\] 
That is, the set $\mathcal{Z}_W $ satisfies the following  estimate (up to a constant error): 

\[
|\sharp \mathcal{Z}_W -  2 \lfloor MW\rfloor| \leq  2.
\]

\end{proposition}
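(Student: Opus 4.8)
The plan is to collapse the two-dimensional nodal count to a one-dimensional one along the off-diagonals $k:=n-m$. First I would note that for $(m,n)\in[0,M]^2$ with $m\neq n$ one has $0<|k|<M<N$, so $\sin(\pi k/N)\neq 0$; hence on the nodal set the denominator of $D_W$ never vanishes, and $D_W(n-m)=0$ if and only if the numerator $\sin(2\pi W(n-m))$ vanishes. Writing $2\pi W k=\pi(2K+1)k/N$, the vanishing condition becomes $2Wk\in\mathbb{Z}$, equivalently the divisibility $N\mid(2K+1)k$. This reduces the whole problem to counting the integers $k\in\{-(M-1),\dots,M-1\}\setminus\{0\}$ satisfying $2Wk\in\mathbb{Z}$, weighted by their multiplicities as off-diagonals.

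Next I would organize the count by off-diagonal. For each admissible $k$ the number of pairs $(m,n)\in[0,M]^2$ with $n-m=k$ is exactly $M-|k|$, and the vanishing condition is invariant under $k\mapsto -k$, so
\[
\sharp\mathcal{Z}_W \;=\; 2\sum_{k=1}^{M-1}(M-k)\,\mathbf{1}\!\left[\,2Wk\in\mathbb{Z}\,\right].
\]
The admissible $k$ on the positive side are the positive multiples of $N/\gcd(N,2K+1)$, i.e.\ an arithmetic progression whose spacing is $1/(2W)=N/(2K+1)$. The target identity $\sharp\mathcal{Z}_W=2(M-r)$ then amounts to the claim that a single dominant off-diagonal, the smallest positive zero $r$, governs the sum; I would isolate this $r$, seek the localization $\lfloor MW\rfloor-1\le r\le\lfloor MW\rfloor+1$ by comparing $r$ with the spacing $1/(2W)$ and using $W\in(0,\tfrac12)$, and then read the leading term $2(M-r)$ off that diagonal, absorbing the contribution of any further multiples and the endpoint rounding into the claimed additive error of size at most $2$.

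The step I expect to be the main obstacle is precisely this final tally and the localization of $r$. Because $\gcd(N,2K+1)$ enters the position of the zeros and the spacing $N/(2K+1)$ need not be an integer, the zero set is only \emph{approximately} an arithmetic progression inside $\mathbb{Z}$, so passing from the clean continuous picture (zeros equispaced by $1/(2W)$) to an exact integer count is delicate. Controlling the rounding uniformly at the two endpoints $k=1$ and $k=M-1$, handling the arithmetic of $\gcd(N,2K+1)$, and reconciling the resulting count with $2\lfloor MW\rfloor$ within the stated error is the crux of the argument; by contrast, the trigonometric reduction of the first two paragraphs is routine. This is where I would concentrate the effort and where the precise relations among $M$, $N$, and $K$ must be used most carefully.
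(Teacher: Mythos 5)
Your reduction to the numerator, the divisibility criterion $N\mid(2K+1)k$, and the multiplicity-weighted count
\[
\sharp\mathcal{Z}_W \;=\; 2\sum_{k=1}^{M-1}(M-k)\,\mathbf{1}\bigl[\,2Wk\in\mathbb{Z}\,\bigr]
\]
are all correct, and in fact more careful than the paper's sketch. But the proof stops exactly where you say it does, and the missing step cannot be carried out as planned. First, the smallest admissible $k$ is $r=N/\gcd(N,2K+1)$, which has no general relation to $\lfloor MW\rfloor$: for $N=20$, $K=2$, $M=19$ one has $W=1/8$ and $\lfloor MW\rfloor+1=3$, yet $r=4$, so the localization you hoped for fails. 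Second, the later multiples of $r$ each contribute $2(M-k)$, a quantity of order $M$, which cannot be absorbed into an additive error of $2$. Indeed, under your pair counting the proposition is simply false: for $N=10$, $K=2$, $M=9$ (so $W=1/4$) the admissible differences are $k=2,4,6,8$, and your sum gives $\sharp\mathcal{Z}_W=2(7+5+3+1)=32$, while $2\lfloor MW\rfloor=4$ and $2(M-r)=14$. No refinement of the gcd bookkeeping or endpoint rounding closes a gap of that size.

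The paper's own proof evades this by a different counting convention, and that is the real point of divergence. There, each vanishing difference $\Delta$ is counted once, contributing only the two pairs $(m,n)$ and $(n,m)$ --- i.e., it counts zero \emph{differences}, not all $M-|\Delta|$ pairs realizing a given difference --- which yields $\sharp\mathcal{Z}_W=2\lfloor 2W(M-1)\rfloor$ and hence a bound within $2$ of $2\lfloor 2MW\rfloor$. (The paper's sketch is itself loose: it tacitly assumes every real zero $j/(2W)$ is an integer, which is exactly the gcd issue you identified, and its conclusion involves $2\lfloor 2MW\rfloor$ while the statement displays $2\lfloor MW\rfloor$.) So the decisive issue is not the rounding analysis you planned to concentrate on, but which set is being counted: your formula is the faithful reading of $\mathcal{Z}_W$ as a set of pairs, and under that reading the claim is unprovable because false; the stated estimate only makes sense for the unweighted count of vanishing differences.
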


\noindent{\bf Remark:}   
This result reflects the fact that the number of oscillations (zero crossings) of the discrete Dirichlet kernel increases proportionally with the time-frequency product \( MW \), paralleling the well-known result that the number of significant  eigenvalues (near $1$) of a time-frequency limiting operator is approximately \( 2MW \). The nodal set structure thus provides insight into the dimensionality and spectral resolution of bandlimited signals in finite settings.

 \vskip.1in 

 \begin{proof}[Proof of Proposition \ref{nodal-Dirchelet}] We provide a sketch of proof here.  
Recall that for \(\Delta = n - m \neq 0\),
\[
D_W(\Delta)
= \frac{\sin\bigl(2\pi W\,\Delta\bigr)}
       {\sin\!\bigl(\pi\,\frac{\Delta}{N}\bigr)}.
\]
The denominator $\sin\!\bigl(\pi\,\frac{\Delta}{N}\bigr)$ never vanishes for \(0<|\Delta|\le M-1<N\), so all zeros come from the numerator. Thus, 
\[
\sin\bigl(2\pi W\,\Delta\bigr)=0
\;\Longleftrightarrow\;
2\pi W\,\Delta = j\pi,
\quad j\in\mathbb Z\setminus\{0\},
\]
hence
\! 
\[
\Delta = \frac{j}{2W}.
\]
We restrict to non-zero integer \(\Delta\) with \(|\Delta|\le M-1\).  Let
\[
J_{\max}
:=\max\bigl\{\ell>0:\;\ell/(2W)\le M-1\bigr\}
=\bigl\lfloor 2W\,(M-1)\bigr\rfloor.
\]
There are exactly \(J_{\max}\) positive integer zeros \(\Delta_1,\dots,\Delta_{J_{\max}}\), and by symmetry each yields two off‑diagonal nodal pairs \((m,n)\) and \((n,m)\).  Hence
\[
\#\mathcal Z_W = 2\,J_{\max}
=2\,\bigl\lfloor2W(M-1)\bigr\rfloor.
\]
Finally, since
\[
2W(M-1) = 2MW - 2W = 2MW + O(1),
\]
rounding down can shift the floor by at most \(1\). Therefore, 
\[
\bigl|\#\mathcal Z_W - 2\lfloor 2 MW\rfloor\bigr|
%
% =\bigl|2\lfloor2W\,(M-1)\rfloor - 2\lfloor2MW\rfloor\bigr|
%
\le2,
\]
as claimed.
\! 
\end{proof}

\vskip.1in

 \section{Auxiliary results}\label{auxilliary-results}

We rcall the notations from Section \ref{ssec:main-results}.

   \subsection{Results in one-dimension}
 Recall  the definition of   a one-dimensional prolate matrix,  $A=T_M B_KT_M$, which admits $M$  positive  eigenvalues $\lambda_N^{(k)}(M,W)$ in $[0,1]$. 

\begin{theorem}[\cite{Karnik21}]\label{Karnik21} For any $N, M\in \mathbb N$ with $M\leq N$, and any 
$W, \epsilon\in (0,1/2)$,

    $$\sharp\{k: \lambda_N^{(k)}(M,W)\in (\epsilon, 1-\epsilon)\} \lesssim  R_\epsilon(MW)$$
    where 
    $$ 
   R_\epsilon(MW) =   \frac{2}{\pi^2}\,\log\bigl(100\,M\,W + 25\bigr)\,
\log\!\Bigl(\frac{5}{\epsilon (\,1 - \epsilon\,)}\Bigr)
\;+\;7.
    $$
% $$
%   R(M, N, \epsilon)  =\log(N)\log(1/\epsilon) + \frac{\log(1/\epsilon)}{\log(N/M)}, 
%  $$
and  the constant in the inequality   is independent of $N$, $M$ and $\epsilon$ and $W$.
\end{theorem}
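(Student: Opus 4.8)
The plan is to reproduce the one-dimensional argument of \cite{Karnik21}, which I would organize around reducing the transition count to two tail counts and then quantifying the eigenvalue crossover. Writing $N(\alpha):=\sharp\{k:\lambda_N^{(k)}(M,W)>\alpha\}$ for $\alpha\in(0,1)$, the quantity to be estimated is exactly
\[
\sharp\{k:\lambda_N^{(k)}(M,W)\in(\epsilon,1-\epsilon)\}=N(\epsilon)-N(1-\epsilon).
\]
Since the ordered eigenvalues sum to $2MW$, the heuristic is that $\lambda_N^{(k)}$ stays near $1$ for $k\lesssim 2MW$ and near $0$ for $k\gtrsim 2MW$; the whole content of the theorem is to make the width of this crossover quantitative, producing the two logarithmic factors $\log(MW)$ and $\log(1/\epsilon)$. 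Observe also that the bound must be independent of $N$, which reflects that the relevant decay is governed by the time--bandwidth product $MW$ alone.

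The key step I would establish is a geometric decay estimate for the ordered eigenvalues away from the index $2MW$: there should be absolute constants $c_1,c_2>0$ such that
\[
\lambda_N^{(k)}(M,W)\le c_1\exp\!\Bigl(-c_2\,\tfrac{k-\lceil 2MW\rceil}{\log(100MW+25)}\Bigr),\qquad k\ge \lceil 2MW\rceil,
\]
together with the dual bound $1-\lambda_N^{(k)}\le c_1\exp(-c_2(\lfloor 2MW\rfloor-k)/\log(100MW+25))$ for $k\le\lfloor 2MW\rfloor$. Granting these, $\lambda_N^{(k)}<\epsilon$ once $k-2MW\gtrsim\log(100MW+25)\log(1/\epsilon)$ and $\lambda_N^{(k)}>1-\epsilon$ once $2MW-k\gtrsim\log(100MW+25)\log(1/(1-\epsilon))$; subtracting the two threshold indices gives $N(\epsilon)-N(1-\epsilon)\lesssim\log(100MW+25)\log\tfrac{1}{\epsilon(1-\epsilon)}$, and the explicit leading constant $2/\pi^2$ and additive constant $7$ are then recovered by bookkeeping of the constants.

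To produce the decay estimate itself I would apply the Courant--Fischer min--max principle together with an explicit low-rank approximation of $A=T_MB_WT_M$. Concretely, for each target rank $r\approx 2MW+j$ one constructs a subspace of codimension $r$ on which the quadratic form $\langle Ax,x\rangle$ is at most the desired threshold; equivalently, one approximates the band-limiting (Dirichlet) kernel by a degenerate finite-rank kernel and controls the residual in operator norm. The residual decays geometrically in $j$ at a rate set by $1/\log(MW)$, and this is precisely what injects the $\log(MW)$ factor and its bandwidth sensitivity. The main obstacle is producing this approximation with the sharp rate: a naive rank count yields only a size-dependent $\log M$ factor or a polynomial $1/\epsilon$ dependence, as in the weaker bounds recalled in the introduction, whereas obtaining the genuinely bandwidth-aware $\log(MW)$ scaling and the $\log(1/\epsilon)$ dependence simultaneously, with the sharp constant $2/\pi^2$ matching the Landau--Widom asymptotic, is the delicate core of the argument in \cite{Karnik21}.
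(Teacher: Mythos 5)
First, a framing point: the paper itself offers no proof of this statement — Theorem \ref{Karnik21} is imported verbatim from \cite{Karnik21} and used as a black box in Proposition \ref{1d:prop} and Lemma \ref{lem2}. So the only question is whether your blind attempt constitutes a proof on its own, and it does not.

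The gap is concrete: the entire analytic content of the theorem sits in the two-sided geometric decay estimates that you introduce with ``there should be absolute constants $c_1,c_2$'' and then invoke under the proviso ``Granting these.'' Nothing in the proposal establishes them. Worse, the route you propose for establishing them — Courant--Fischer min--max combined with an explicit low-rank approximation of the Dirichlet kernel, with residual decaying at a rate set by $1/\log(\cdot)$ — is precisely the technique of \cite{karnik2019fast}, and, as the paper's own survey of prior 1D bounds records, that technique yields
$\#\bigl\{k:\epsilon<\lambda_k(M,W)<1-\epsilon\bigr\}\le\bigl(\tfrac{8}{\pi^2}\log(8M)+12\bigr)\log\bigl(\tfrac{15}{\epsilon}\bigr)$:
the leading constant is $8/\pi^2$ (four times the one claimed here) and the logarithm involves $M$ alone, with no dependence on the bandwidth $W$. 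So even if you carried out your plan in full, you would recover the 2019 bound, not the stated one. The passage from $\log(8M)$ to $\log(100MW+25)$ and from $8/\pi^2$ to the sharp Landau--Widom constant $2/\pi^2$ is not ``bookkeeping of the constants'' but is exactly the content of \cite{Karnik21}, which rests on a different and more delicate argument; your final sentence effectively concedes this, which makes the proposal an accurate description of what a proof would need, but not a proof. (A minor additional point: your identity $\#\{k:\lambda_N^{(k)}(M,W)\in(\epsilon,1-\epsilon)\}=N(\epsilon)-N(1-\epsilon)$ actually counts the half-open interval $(\epsilon,1-\epsilon]$; this is harmless for an upper bound, but it should be stated as an inequality or adjusted at the endpoint.)
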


We recall the following result from \cite{zhu2017approximating}. 

\begin{theorem}\label{cross-index}
Let $M,N,K\in\mathbb{N}$ satisfy $M<N$ and $2K+1<N$.
Fix a bandwidth parameter $W\in(0,\tfrac12)$. 
Denote by $\lambda_N^{(r)}(M,W)$, $r=0,\dots,M-1$, the eigenvalues of the
prolate matrix 
ordered non-increasingly.

Then
$$
\lambda_N^{(\lfloor 2MW\rfloor -1)}(M,W) \geq 1/2 \geq \lambda_N^{(\lfloor 2MW\rfloor+1)}(M,W), 
$$
\end{theorem}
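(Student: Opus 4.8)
The plan is to locate the single index at which the eigenvalues of $A = T_M B_K T_M$ cross the level $1/2$. Since Slepian's theorem \cite{slepian1978-V} (invoked already in the proof of Theorem \ref{A-is-selfadjoint}) guarantees that in one dimension the eigenvalues are simple, $1 > \lambda_N^{(0)}(M,W) > \lambda_N^{(1)}(M,W) > \cdots > \lambda_N^{(M-1)}(M,W) > 0$, the quantity $r^\ast := \max\{\,r : \lambda_N^{(r)}(M,W) \geq \tfrac12\,\}$ is well defined, and by strict monotonicity the two displayed inequalities are together equivalent to the single two-sided localization $\lfloor 2MW\rfloor - 1 \le r^\ast \le \lfloor 2MW\rfloor$. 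The natural first step is the trace identity: by Lemma \ref{lem:kernel} (taken with $d=1$) every diagonal entry of $A$ equals $K(\mathbf 0)=2W$, so
\[
\sum_{r=0}^{M-1}\lambda_N^{(r)}(M,W) = \operatorname{tr}(A) = 2MW .
\]
This identifies $2MW$ as the center of mass of the spectrum and is the structural reason the crossing should sit near index $2MW$.

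Before attacking both inequalities, I would halve the work with a band-complement symmetry. On $\operatorname{range}(T_M)\cong\mathbb C^{M}$ the truncated identity acts as $I_M$, so $T_M(I-B_K)T_M = I_M - A$; conjugating $I-B_K$ by the modulation $\operatorname{diag}((-1)^n)$, which is diagonal and hence commutes with $T_M$, carries the high-frequency band onto a centered band of complementary width. Thus $\{\,1-\lambda_N^{(r)}(M,W)\,\}_r$ is exactly the spectrum of the prolate matrix of bandwidth $\tfrac12-W$. Consequently the lower-tail statement $\lambda_N^{(\lfloor 2MW\rfloor+1)}(M,W)\le \tfrac12$ is the mirror image of an upper-tail statement for the $(\tfrac12-W)$-prolate matrix; provided the decay estimates below are uniform in the bandwidth, it suffices to prove the single upper-tail bound $\lambda_N^{(\lfloor 2MW\rfloor-1)}(M,W)\ge \tfrac12$ for all admissible $W$.

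For this remaining bound I would invoke the non-asymptotic eigenvalue-decay estimates of \cite{zhu2017approximating}, which control with explicit constants how rapidly $\lambda_N^{(r)}(M,W)$ approaches $1$ as $r$ drops below $2MW$; in particular they yield $\lambda_N^{(r)}(M,W) \ge \tfrac12$ once $r$ is at most $2MW$ minus a controlled gap, and the sharp form of the theorem is that this gap closes to one index unit after the rounding in $\lfloor 2MW\rfloor$ is accounted for. The main obstacle is precisely this $\pm 1$ sharpness. Neither tool developed so far is adequate on its own: a crude two-term split of $\operatorname{tr}(A)$ is too weak for small $W$, and the transition band $(\epsilon,1-\epsilon)$ may house on the order of $\log(MW)\log(1/\epsilon)$ eigenvalues (Theorem \ref{Karnik21}), so many eigenvalues can cluster just above and below $1/2$. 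The argument therefore genuinely rests on the quantitative, explicit-constant bounds of \cite{zhu2017approximating}, combined with careful bookkeeping of the floor function and of the (non-generic) possibility of an eigenvalue exactly equal to $1/2$.
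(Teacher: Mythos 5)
First, note that the paper does not prove Theorem \ref{cross-index} at all: it is explicitly \emph{recalled} from \cite{zhu2017approximating} and used as a black box, so there is no internal proof to compare against. Your attempt must therefore stand on its own as a proof, and it does not. The decisive step --- that the spectrum crosses the level $1/2$ within one index of $2MW$ --- is exactly the content of the theorem, and your argument handles it by ``invoking the non-asymptotic eigenvalue-decay estimates of \cite{zhu2017approximating},'' i.e.\ by citing the very source from which the statement is taken, without stating, verifying, or even sketching those estimates. This is circular. Your own closing sentences concede the point: a trace identity ($\operatorname{tr}(A)=2MW$) and a band-complement symmetry constrain the spectrum's center of mass but cannot rule out, say, $\lfloor 2MW\rfloor + 10$ eigenvalues above $1/2$ compensated by a slightly thinner upper cluster; and the transition band can legitimately contain $O(\log(MW)\log(1/\epsilon))$ eigenvalues, so no soft counting argument pins the crossing to a window of width~$2$. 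The ``controlled gap'' you attribute to decay estimates must literally be $\le 1$ index, which is the theorem itself, not something weaker from which the theorem follows.

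There is also a technical flaw in the one reduction you do carry out. The band-complement symmetry via conjugation by $\operatorname{diag}((-1)^n)$ is exact for the sinc-kernel (DTFT) prolate matrix, but the matrix in this paper is the \emph{periodic} (Dirichlet-kernel, DFT-based) one: the complement of $\{-K,\dots,K\}$ in $\mathbb{Z}_N$ has $N-2K-1$ elements and is centered at $N/2$, so it is a modulated copy of a centered band $\{-K',\dots,K'\}$ only when $N$ is even (modulation by $(-1)^n$ realizes a frequency shift of $N/2$, and $N-2K-1$ must be odd). For odd $N$ the complement band is not of the admissible form $\{-K',\dots,K'\}$, ``bandwidth $\tfrac12 - W$'' is not realizable in the paper's parametrization, and your halving-the-work step breaks down. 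A correct self-contained route would instead have to reproduce the argument of \cite{zhu2017approximating} (or an analogue adapted to the periodic kernel), e.g.\ combining the pairing $\lambda_r(M,W) + \lambda_{M-1-r}(M,\tfrac12-W) = 1$ with quantitative control of individual eigenvalues near index $2MW$ --- none of which appears in your proposal.
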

As a consequence of this lemma, 
 approximately \(2MW\) eigenvalues lie above the \(1/2\) threshold (hence are
close to~\(1\)), while the remaining \(N - 2MW\) eigenvalues lie below
\(1/2\) (and are therefore close to~\(0\)).

\medskip 
By combining Theorems  \ref{Karnik21}  and \ref{cross-index}
 we obtain our next result in 1D setting.
 
 \begin{proposition}\label{1d:prop}
    For any $\gamma\in (0,1)$
    $$  \Bigl|\sharp\{k: ~ 
   \lambda_N^{(k)}(M,W)> \gamma\} -   2MW  \Bigr|\lesssim       R_\gamma(MW)
    $$  
    
    where the constant in the inequality is independent of $M,W$ and $\gamma$.
 %    (We let all the constants o be absorbed in the 
 % $\lesssim$ notation.)
\end{proposition}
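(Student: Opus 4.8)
The plan is to use the $1/2$-level crossing from Theorem~\ref{cross-index} as a fixed reference point, and then to absorb the discrepancy between the $\gamma$-count and $2MW$ into the transition-band estimate of Theorem~\ref{Karnik21}. Write $\mathscr{N}_\gamma:=\sharp\{k:\lambda_N^{(k)}(M,W)>\gamma\}$ for the quantity of interest. First I would record the consequence of Theorem~\ref{cross-index}: since the eigenvalues are ordered non-increasingly and $\lambda_N^{(\lfloor 2MW\rfloor-1)}(M,W)\ge \tfrac12\ge\lambda_N^{(\lfloor 2MW\rfloor+1)}(M,W)$, the count $n_{1/2}:=\sharp\{k:\lambda_N^{(k)}(M,W)>\tfrac12\}$ satisfies $\lfloor 2MW\rfloor-1\le n_{1/2}\le \lfloor 2MW\rfloor+1$, and therefore $\lvert n_{1/2}-2MW\rvert\le 2$.

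The key elementary observation is that the right-hand side $R_\gamma(MW)$ is symmetric under $\gamma\mapsto 1-\gamma$, because it depends on $\gamma$ only through the product $\gamma(1-\gamma)$; thus $R_\gamma(MW)=R_{1-\gamma}(MW)$. This symmetry allows Theorem~\ref{Karnik21} to be invoked with the admissible threshold $\min\{\gamma,1-\gamma\}\in(0,\tfrac12)$ while still reading off a bound stated in terms of $R_\gamma(MW)$. I would then split into the two cases $\gamma\le\tfrac12$ and $\gamma\ge\tfrac12$.

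In the case $\gamma\le\tfrac12$, the set $\{\lambda>\gamma\}$ contains $\{\lambda>\tfrac12\}$, so the surplus equals $\sharp\{k:\lambda_N^{(k)}(M,W)\in(\gamma,\tfrac12]\}$; since $(\gamma,\tfrac12]\subseteq(\gamma,1-\gamma)$, Theorem~\ref{Karnik21} bounds it by a constant times $R_\gamma(MW)$. In the case $\gamma\ge\tfrac12$, the inclusion reverses, and the deficit $\sharp\{k:\lambda_N^{(k)}(M,W)\in(\tfrac12,\gamma]\}$ lies inside $(1-\gamma,\gamma]$; the open part $(1-\gamma,\gamma)$ is controlled by $R_{1-\gamma}(MW)=R_\gamma(MW)$ through Theorem~\ref{Karnik21} applied with $\epsilon=1-\gamma$, while the single endpoint $\{\gamma\}$ contributes at most the multiplicity of $\gamma$, which is $O(1)$ (indeed the $1$D prolate eigenvalues are simple). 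Combining the per-case estimate with $\lvert n_{1/2}-2MW\rvert\le 2$ via the triangle inequality gives $\lvert \mathscr{N}_\gamma-2MW\rvert\lesssim R_\gamma(MW)$, the additive $O(1)$ terms being harmless since $R_\gamma(MW)\ge 7$.

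All the computations here are routine; the only points requiring care are the bookkeeping at the interval endpoints (open versus half-open intervals, and the ambiguous index $\lfloor 2MW\rfloor$ at the $1/2$ level) and the verification that the symmetry $R_\gamma=R_{1-\gamma}$ legitimately reduces the regime $\gamma>\tfrac12$ to an application of Theorem~\ref{Karnik21} with admissible threshold $1-\gamma\in(0,\tfrac12)$. I do not expect any single step to be a genuine obstacle once the $1/2$-crossing is used as an anchor: the heavy lifting is entirely supplied by the two cited one-dimensional theorems.
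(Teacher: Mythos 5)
Your proposal is correct and follows essentially the same route as the paper's proof: both anchor the count at the $1/2$-level via Theorem~\ref{cross-index}, split into the cases $\gamma\le\tfrac12$ and $\gamma\ge\tfrac12$, and control the surplus/deficit with Theorem~\ref{Karnik21} together with the symmetry $R_\gamma=R_{1-\gamma}$. Your treatment is in fact slightly more careful than the paper's at the interval endpoints (the half-open intervals and the $O(1)$ contribution of a possible eigenvalue exactly at $\gamma$), but the substance is identical.
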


\begin{proof}
   First,  assume that $\gamma\in (0,1/2)$. Then 
  write
\[
\# \{ k : \lambda_N^{(k)}(M,W) > \gamma \} = \# \{ k : \lambda_N^{(k)}(M,W) \in [1/2,1) \} +  \# \{ k : \lambda_N^{(k)}(M,W) \in (\gamma,1/2)\}.
\]
The first term is between $\lfloor 2MW\rfloor -1$ and $\lfloor 2MW\rfloor +1$, by  Theorem \ref{cross-index}, and the fact that the eigenvalues $ \lambda_N^{(k)}(M,W)$ are strictly decreasing (see \cite{slepian1978-V}). 
 The second term is at most $\# \{ k : \lambda_N^{(k)}(M,W) \in (\gamma, 1 - \gamma) \} \lesssim R_\gamma(MW) $,  due to  Theorem   \ref{Karnik21}.  This proves that 
    $$  \Bigl|\sharp\{k: ~ 
   \lambda_N^{(k)}(M,W)> \gamma\} - \lfloor 2MW\rfloor  \Bigr|\lesssim R_\gamma(MW). 
    $$  
  Now, we assume that $\gamma>1/2$.   
Then we can write
\[
\# \{ k : \lambda_N^{(k)}(M,W) > \gamma \} = \# \{ k : \lambda_N^{(k)}(M,W) \in [1/2,1) \} -  \# \{ k : \lambda_N^{(k)}(M,W)\in [1/2, \gamma] \}.
\] 
Again, 
by  Theorem   \ref{cross-index}, the first term is between 
 $\lfloor 2MW\rfloor -1$ and $\lfloor 2MW\rfloor +1$, and the second term is less than $\sharp\{k: \lambda_N^{(k)}(M,W)\in (1-\gamma, \gamma)\}$. 
 Therefore, by applying Theorem   \ref{Karnik21} to  the second part and considering the symmetric  nature of the term $R_\gamma$,   we obtain 

 \[
\# \{ k : \lambda_N^{(k)}(M,W) > \gamma \} \gtrsim
\lfloor 2MW\rfloor -1  - R_\gamma(MW)
\]  
 
 This implies that 
 \[
\Bigl|\# \{ k : \lambda_N^{(k)}(M,W) > \gamma \}  -   \lfloor 2NW\rfloor\Bigr|    \lesssim    R_\gamma(MW). 
\]  

This completes the proof. 
\end{proof}
\subsection{Results in d-dimensions} 

In this section, we establish a series of lemmas in higher dimensions that we will use to prove our main theorem in the next section.

Let $N, M, K \in \mathbb Z$, with $M< N$ and $2K+1< N$. Let $W\in (0,1/2)$. Recall the multi-dimensional prolate matrix, ${\bf A}= T_{\bf M} B_{\bf K} T_{\bf M}$,  introduced in Section      \ref{ssec:main-results},  which admits positive eigenvalues $\lambda_{\bf N}^{(r)}:= \lambda_{\bf N}^{(r)}(\bf M,\bf K)$  lying in the interval $[0,1]$. 
Also, recall the following  notations:

\begin{align*}
&\mathscr{m}_\epsilon(\textbf{M,K}) := \sharp \{r\in \mathbb N: ~ \lambda_{\bf N}^{(r)}>  \epsilon\},  \quad  \epsilon\in (0,1)\\
&\mathscr{n}_\epsilon(\textbf{M,K}) := \sharp \{r\in \mathbb N: ~ \lambda_{\bf N}^{(r)}  \in (\epsilon,1-\epsilon) \}, \quad \epsilon \in (0, 1/2).
\end{align*}

% Let  $\lambda_{M}^{(r)}:=\lambda_{M}^{(r)}(c)$  denote the eigenvalues in one dimension. 

\begin{lemma}\label{lem1} Let $\lambda_{N}^{(r)}:=\lambda_{N}^{(r)}(M,W)$, $1\leq r\leq M$,  denote the eigenvalues of 1D  prolate $A= T_MB_K T_M$. 
Then  for any $\epsilon \in (0,1)$
 \begin{equation}\label{tensor_bd:eqn}
( \# \{ r : \lambda_{N}^{(r)} > \epsilon^{1/d}\} )^d \leq \mathscr{m}_\epsilon ({\bf M}, {\bf K}) \leq (\# \{ r : \lambda_{N}^{(r)} > \epsilon\})^d.
\end{equation}
 
\end{lemma}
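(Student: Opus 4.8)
The plan is to reduce both inequalities to the product structure of the spectrum established in Lemma \ref{spectrum-of-tensors}. Since the parameters $N,M,K,W$ are common to all $d$ coordinates, the matrix $A$ is the $d$-fold tensor power of a single one-dimensional prolate matrix, so by Lemma \ref{spectrum-of-tensors} its eigenvalues, counted with multiplicity, are exactly the products $\prod_{i=1}^d \lambda_N^{(j_i)}$ as $(j_1,\dots,j_d)$ ranges over $\{1,\dots,M\}^d$. Consequently
\[
\mathscr{m}_\epsilon(\mathbf{M},\mathbf{K}) = \#\Bigl\{(j_1,\dots,j_d) \in \{1,\dots,M\}^d : \prod_{i=1}^d \lambda_N^{(j_i)} > \epsilon\Bigr\},
\]
and the lemma becomes a purely combinatorial counting statement about this product set.

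For the upper bound I would use that, by Theorem \ref{A-is-selfadjoint}, every $\lambda_N^{(r)}\in(0,1)$. Hence for any tuple one has $\prod_{i=1}^d \lambda_N^{(j_i)} \le \min_i \lambda_N^{(j_i)}$, because the omitted factors are at most $1$. Therefore, if the product exceeds $\epsilon$, then each individual factor $\lambda_N^{(j_i)}$ must exceed $\epsilon$. This exhibits the admissible product set as a subset of $\{r : \lambda_N^{(r)} > \epsilon\}^d$, whence $\mathscr{m}_\epsilon(\mathbf{M},\mathbf{K}) \le (\#\{r : \lambda_N^{(r)} > \epsilon\})^d$.

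For the lower bound I would argue in the reverse direction: if each coordinate satisfies $\lambda_N^{(j_i)} > \epsilon^{1/d}$, then $\prod_{i=1}^d \lambda_N^{(j_i)} > (\epsilon^{1/d})^d = \epsilon$. Thus every tuple drawn from $\{r : \lambda_N^{(r)} > \epsilon^{1/d}\}^d$ contributes to the count defining $\mathscr{m}_\epsilon$, giving $(\#\{r : \lambda_N^{(r)} > \epsilon^{1/d}\})^d \le \mathscr{m}_\epsilon(\mathbf{M},\mathbf{K})$. Combining the two estimates yields \eqref{tensor_bd:eqn}.

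The argument is essentially immediate once the spectral factorization is in hand; the only point requiring care is the bookkeeping of multiplicities. I would make explicit that $\mathscr{m}_\epsilon(\mathbf{M},\mathbf{K})$ counts eigenvalues with multiplicity and that the $M^d$ tensor eigenvectors produced in Lemma \ref{spectrum-of-tensors} form a complete, linearly independent eigenbasis, so that counting admissible index tuples $(j_1,\dots,j_d)$ coincides exactly with counting eigenvalues above the threshold. In particular no collision or overcounting occurs even when several distinct tuples happen to yield equal products, which is what allows the crude factorwise bounds to be combined into \eqref{tensor_bd:eqn} without loss.
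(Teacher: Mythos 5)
Your proposal is correct and follows essentially the same route as the paper's own proof: both reduce $\mathscr{m}_\epsilon(\mathbf{M},\mathbf{K})$ to counting index tuples via the tensor-product factorization of the spectrum, then use the two elementary observations that (since all factors lie in $(0,1)$) a product exceeding $\epsilon$ forces every factor to exceed $\epsilon$, while every factor exceeding $\epsilon^{1/d}$ forces the product to exceed $\epsilon$, yielding the same sandwich of sets. Your explicit remark on multiplicity bookkeeping is a small but worthwhile addition, since the paper leaves that identification implicit (and in fact its displayed conclusion contains a stray exponent $d$ on $\mathscr{m}_\epsilon(\mathbf{M},\mathbf{K})$ that your version correctly avoids).
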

\begin{proof} 
   Recall that the   all eigenvalues of  $A$ lie in $(0,1)$. Let $\epsilon\in (0,1)$.  If, for some multi-index \(\ell=(r_1,\dots,r_d)\), $1\leq r_i\leq M$,
\! 
\[
\lambda_{\mathbf N}^{(\ell)}
\;:=\;
\prod_{i=1}^{d}\lambda_N^{(r_i)}
\;>\;\epsilon,
\]
then
\! 
\[
\lambda_N^{(r_i)}>\epsilon
\quad\text{for every } i=1,\dots,d.
\]   
Because each factor \(\lambda_N^{(r_i)}\) lies in \((0,1)\), the product is bounded above by the smallest factor:
\[
\prod_{i=1}^{d}\lambda_N^{(r_i)}
\;\le\;
\min_{1\le i\le d}\lambda_N^{(r_i)}.
\]

Therefore, if the product exceeds \(\epsilon\), the minimum, and hence every factor, must also exceed \(\epsilon\). 
 Conversely, if $\lambda_N^{(r_i)} > \epsilon^{1/d}$ for all $i$, then $\lambda_{\bf N}^{(\ell)}= \prod_{k=1
}^d \lambda_N^{(r_i)} > \epsilon$. 
Therefore, 
\[
\bigl\{\, r : \lambda_{N}^{(r)} > \epsilon^{1/d} \bigr\}^{\, d}
\;\subseteq\;
\Bigl\{\, (r_{1},\ldots,r_{d}) \in \{1,\dots,M\}^{d} :
        \prod_{k=1}^{d} \lambda_{N}^{(r_k)} > \epsilon \Bigr\}
\;\subseteq\;
\bigl\{\, r : \lambda_{N}^{(r)} > \epsilon \bigr\}^{\, d} 
\]

This implies that 
\! 
\[
\Bigl(\sharp \bigl\{\, r : \lambda_{N}^{(r)}  > \epsilon^{1/d} \bigr\}\Bigr)^{\, d}
\;\leq \;
 \mathscr{m}_\epsilon ({\bf M}, {\bf K})^d 
\;\leq\;
 \Bigl(\sharp \bigl\{\, r : \lambda_{N}^{(r)}  > \epsilon  \bigr\}\Bigr)^{\, d} 
\]
\end{proof}

We need the following technical lemma: 

\begin{lemma}\label{chi} For $\epsilon\in (0,1)$, define $\chi(\epsilon)$ by  
\[
\chi(\epsilon)\;:=\;\log\!\Bigl(\frac{1}{\epsilon(1-\epsilon)}\Bigr). 
\]

Then

\begin{itemize}
\item[(i)] $\chi(\epsilon)   \leq 2 \log(1/\epsilon)$ for all $\epsilon\in (0,1/2)$.   
 \item[(ii)]
 \(\chi(\epsilon^{1/d}) \lesssim  \chi(\epsilon)\), where the constant is  
\(C_d \;=\; 2 +\frac{\log(d)}{\log(4)}\), for all $\epsilon\in (0,1)$. 
\end{itemize}
\end{lemma}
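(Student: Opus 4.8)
The plan is to treat the two parts separately, in each case splitting $\chi$ additively as $\chi(t)=\log\frac1t+\log\frac{1}{1-t}$ and controlling the two pieces independently.

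For part (i), I would observe that on $(0,1/2)$ we have $\epsilon\le 1-\epsilon$, hence $\frac{1}{1-\epsilon}\le\frac1\epsilon$ and so $\log\frac{1}{1-\epsilon}\le\log\frac1\epsilon$. Adding $\log\frac1\epsilon$ to both sides gives $\chi(\epsilon)\le 2\log\frac1\epsilon$ at once. For part (ii), the first step is to write $\chi(\epsilon^{1/d})=\frac1d\log\frac1\epsilon+\log\frac{1}{1-\epsilon^{1/d}}$. The first summand is harmless, since $\frac1d\log\frac1\epsilon\le\log\frac1\epsilon\le\chi(\epsilon)$ (using that $\log\frac{1}{1-\epsilon}\ge 0$ for $\epsilon\in(0,1)$). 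The delicate term is $\log\frac{1}{1-\epsilon^{1/d}}$, which diverges as $\epsilon\to1$; the whole point is that $\chi(\epsilon)$ diverges at the same rate.

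To make this quantitative I would use the geometric factorization
\[
1-\epsilon=(1-\epsilon^{1/d})\sum_{k=0}^{d-1}\epsilon^{k/d},
\]
whose sum lies in $(1,d)$ for every $\epsilon\in(0,1)$. This yields $1-\epsilon^{1/d}\ge(1-\epsilon)/d$, hence $\log\frac{1}{1-\epsilon^{1/d}}\le\log d+\log\frac{1}{1-\epsilon}\le\log d+\chi(\epsilon)$. Combining the two summands gives the additive estimate $\chi(\epsilon^{1/d})\le 2\chi(\epsilon)+\log d$.

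The final step converts the additive $\log d$ into the claimed multiplicative constant. Here I would invoke the uniform lower bound $\chi(\epsilon)\ge\log 4$, valid for all $\epsilon\in(0,1)$ because $\epsilon(1-\epsilon)$ attains its maximum $\tfrac14$ at $\epsilon=\tfrac12$. Writing $\log d=\frac{\log d}{\log 4}\,\log 4\le\frac{\log d}{\log 4}\,\chi(\epsilon)$ then produces $\chi(\epsilon^{1/d})\le\bigl(2+\frac{\log d}{\log 4}\bigr)\chi(\epsilon)$, which is precisely the stated $C_d$. The main obstacle is the endpoint $\epsilon\to1$, where both sides blow up; the geometric factorization is the key device showing the two blow-ups cancel, leaving only the bounded additive defect $\log d$ that the global lower bound $\chi\ge\log 4$ absorbs.
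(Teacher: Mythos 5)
Your proof is correct and follows the same skeleton as the paper's: the same additive split of $\chi(\epsilon^{1/d})$ into $\tfrac1d\log\tfrac1\epsilon$ plus $\log\tfrac{1}{1-\epsilon^{1/d}}$, the same key inequality $1-\epsilon^{1/d}\ge(1-\epsilon)/d$, the same intermediate additive bound $\chi(\epsilon^{1/d})\le 2\chi(\epsilon)+\log d$, and the same absorption of $\log d$ via the lower bound $\chi\ge\log 4$. The one point where you genuinely diverge is the proof of $1-\epsilon^{1/d}\ge(1-\epsilon)/d$: you obtain it from the algebraic factorization $1-\epsilon=(1-\epsilon^{1/d})\sum_{k=0}^{d-1}\epsilon^{k/d}$, bounding the geometric sum by $d$, whereas the paper derives it from concavity of $t\mapsto 1-e^{t}$ on $(-\infty,0]$ evaluated at $t=\log\epsilon$; your route is more elementary (no convexity argument needed) and equally sharp. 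A second, minor difference: you justify $\chi(\epsilon)\ge\log 4$ on all of $(0,1)$ at once from $\epsilon(1-\epsilon)\le\tfrac14$, while the paper establishes it on $(0,1/2)$ and extends by the symmetry $\chi(\epsilon)=\chi(1-\epsilon)$; your version is cleaner and avoids the case split. Part (i) is the same one-line argument in both (you bound $\tfrac{1}{1-\epsilon}$ by $\tfrac1\epsilon$, the paper bounds it by $2$; either works on $(0,1/2)$).
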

\begin{proof} 

(i):   Assume that $\epsilon\leq 1/2$. Then 
$\chi(\epsilon) \leq \log(2/\epsilon)\leq 2 \log(1/\epsilon)$.  
% If $\epsilon>1/2$, then $1-\epsilon<1/2$ and $\chi(\epsilon) = \chi(1-\epsilon) \leq 2 \log(\frac{1}{1-\epsilon}) \leq 2 $

\medskip 

(ii) 
First, write 
\[
\chi\bigl(\epsilon^{1/d}\bigr)
=\log\!\Bigl(\frac{1}{\,\epsilon^{1/d}(1-\epsilon^{1/d})}\Bigr)
=\underbrace{\frac{1}{d}\,\log\!\bigl(\tfrac{1}{\epsilon}\bigr)}_{(I)}
\;+\;
\underbrace{\log\!\Bigl(\frac{1}{\,1-\epsilon^{1/d}\,}\Bigr)}_{(II)}.
\]
% Also,
% \[
% \chi(\epsilon)
% =\log\!\Bigl(\frac{1}{\epsilon(1-\epsilon)}\Bigr)
% =\log\!\bigl(\tfrac{1}{\epsilon}\bigr)
% \;+\;\log\!\Bigl(\frac{1}{\,1-\epsilon\,}\Bigr).
% \]

% \noindent \textbf{Bound for (I).}

Since 
\(\chi(\epsilon)=\log\!\bigl(\tfrac{1}{\epsilon}\bigr)
+\log\!\bigl(\tfrac{1}{1-\epsilon}\bigr)\), 
we have 
\[
(I)
=\frac{1}{d}\,\log\!\bigl(\tfrac{1}{\epsilon}\bigr)
\;\le\;\log\!\bigl(\tfrac{1}{\epsilon}\bigr)
\;\le\;\chi(\epsilon).
\]
Therefore,
\[
(I)\;\le\;\chi(\epsilon).
\]

% \noindent \textbf{Bound for (II).} 
Now, we find an upper bound for (II). 
Observe that for any \(y\in(0,1)\), 
\[
1 - y^{1/d}
\;=\;
1 - \exp\!\Bigl(\tfrac{1}{d}\,\log(y)\Bigr).
\]
Since the function \(f(t)=1-e^{t}\) is concave on \((-\infty,0]\), 
we have for \(t=\log(y)\le0\):
\[
1 - y^{1/d}
=1 - e^{\,t/d}
\;\ge\;\frac{1}{d}\,\bigl(1 - e^{\,t}\bigr)
=\frac{1}{d}(1 - y).
\]
In particular, with \(y=\epsilon\),
\[
1 - \epsilon^{1/d}
\;\ge\;\frac{\,1-\epsilon\,}{\,d\,}
\quad\Longrightarrow\quad
\frac{1}{\,1 - \epsilon^{1/d}\,}
\;\le\;
\frac{\,d\,}{\,1 - \epsilon\,}.
\]
Taking logarithms gives
\[
(II)
=\log\!\Bigl(\frac{1}{\,1 - \epsilon^{1/d}\,}\Bigr)
\;\le\;
\log\!\Bigl(\frac{d}{\,1 - \epsilon\,}\Bigr)
=\log(d)\;+\;\log\!\Bigl(\tfrac{1}{\,1-\epsilon\,}\Bigr).
\]
Since \(\log\bigl(1/(1-\epsilon)\bigr)\le\chi(\epsilon)\), it follows that
\[
(II)
\;\le\;
\log(d)\;+\;\chi(\epsilon).
\]

\medskip

Combining the bounds for (I) and (II), yields
\[
\chi\bigl(\epsilon^{1/d}\bigr)
=(I)+(II)
\;\le\;\chi(\epsilon)\;+\;\bigl(\log(d)+\chi(\epsilon)\bigr)
=2\,\chi(\epsilon)\;+\;\log(d).
\]
Noting that \(\chi(\epsilon)\ge\chi\bigl(\tfrac12\bigr)=\log(4)\) for all 
\(\epsilon\in(0,1/2)\), we get
\[
2\,\chi(\epsilon)+\log(d)
\;\le\;
\Bigl(2 + \tfrac{\log(d)}{\log(4)}\Bigr)\,\chi(\epsilon)
= C_{d}\,\chi(\epsilon).
\] 
By the symmetry $\chi(\epsilon) = \chi(1-\epsilon)$, the same estimation also holds for $\epsilon\in (1/2,1)$.

Therefore,
\[
\chi\bigl(\epsilon^{1/d}\bigr)
\;\le\;
C_{d}\,\chi(\epsilon), \quad \forall \, \,  0<\epsilon<1
\]
as claimed.
\end{proof}
 
\begin{lemma}\label{lem2} Let $M, W, K$ are as above. For any $\epsilon\in (0,1)$ 
we have 
    \begin{align}\label{M-2MW}
\Bigl|  \mathscr{m}_\epsilon({\bf M}, {\bf K}) - (2MW)^d\Bigr| \lesssim_d  \log(MW)\,\log(1/\epsilon)\,
\max\{\, (\log(MW)\log(1/\epsilon))^{\,d-1},\,(2MW)^{\,d-1}\}. 
\end{align} 
\end{lemma}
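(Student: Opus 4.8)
The plan is to reduce the $d$-dimensional counting function $\mathscr{m}_\epsilon(\mathbf{M},\mathbf{K})$ to one-dimensional eigenvalue counts, for which Proposition \ref{1d:prop} already provides sharp control, and then to lift the resulting additive one-dimensional error through the power map $t\mapsto t^d$. Throughout I write $\mu:=2MW$ and $R:=\log(MW)\log(1/\epsilon)$, and set
\[
a:=\#\{r:\lambda_N^{(r)}(M,W)>\epsilon^{1/d}\},\qquad
b:=\#\{r:\lambda_N^{(r)}(M,W)>\epsilon\}.
\]
By Lemma \ref{lem1} we have the sandwich $a^d\le \mathscr{m}_\epsilon(\mathbf{M},\mathbf{K})\le b^d$, so that
\[
|\mathscr{m}_\epsilon(\mathbf{M},\mathbf{K})-\mu^d|\le\max\bigl(|a^d-\mu^d|,\,|b^d-\mu^d|\bigr).
\]
It therefore suffices to bound each of $|a^d-\mu^d|$ and $|b^d-\mu^d|$ by the right-hand side of \eqref{M-2MW}.

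Next I would control the one-dimensional deviations. Proposition \ref{1d:prop}, applied with thresholds $\gamma=\epsilon^{1/d}$ and $\gamma=\epsilon$, gives $|a-\mu|\lesssim R_{\epsilon^{1/d}}(MW)$ and $|b-\mu|\lesssim R_{\epsilon}(MW)$. Recalling $R_\gamma(MW)=\tfrac{2}{\pi^2}\log(100MW+25)\log\!\bigl(\tfrac{5}{\gamma(1-\gamma)}\bigr)+7$ together with the identity $\log\!\bigl(5/(\gamma(1-\gamma))\bigr)=\log 5+\chi(\gamma)$, I would absorb the additive constants under the standing assumption $\log(MW)\gtrsim 1$ and then invoke Lemma \ref{chi}: part (i) gives $\chi(\epsilon)\le 2\log(1/\epsilon)$, while part (ii) gives $\chi(\epsilon^{1/d})\le C_d\,\chi(\epsilon)$ for all $\epsilon\in(0,1)$ — the latter being exactly what is needed, since $\epsilon^{1/d}$ may well exceed $\tfrac12$ and so part (i) cannot be applied to it directly. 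Combining these, both $R_{\epsilon^{1/d}}(MW)$ and $R_{\epsilon}(MW)$ are $\lesssim_d \log(MW)\log(1/\epsilon)=R$, whence $|a-\mu|\lesssim_d R$ and $|b-\mu|\lesssim_d R$.

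The heart of the argument is the lifting step. Using the factorization
\[
x^d-y^d=(x-y)\sum_{j=0}^{d-1}x^j y^{\,d-1-j},\qquad x,y\ge 0,
\]
with $(x,y)=(a,\mu)$ and then $(x,y)=(b,\mu)$, and bounding each of the $d$ summands by $\max(a,\mu)^{d-1}$ (resp.\ $\max(b,\mu)^{d-1}$), I obtain $|a^d-\mu^d|\le d\,|a-\mu|\,\max(a,\mu)^{d-1}$. Since $\max(a,\mu)\le \mu+|a-\mu|\lesssim_d \mu+R\lesssim_d \max(\mu,R)$, and identically for $b$, this yields
\[
|a^d-\mu^d|\lesssim_d R\,\max(\mu,R)^{d-1}=R\,\max\{\mu^{d-1},R^{d-1}\},
\]
and the same bound for $|b^d-\mu^d|$. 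Feeding this back into the sandwich and substituting $\mu=2MW$, $R=\log(MW)\log(1/\epsilon)$ gives precisely \eqref{M-2MW}.

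The obstacle I anticipate is bookkeeping rather than conceptual: one must check that the power-map factorization really does convert the additive one-dimensional error of size $R$ into the stated multiplicative error $R\cdot\max\{\mu^{d-1},R^{d-1}\}$, where the $\max$ arises precisely because $\max(a,\mu)^{d-1}$ interpolates between the transition-band-dominated regime $R\gtrsim\mu$ and the bulk-dominated regime $\mu\gtrsim R$, and that the implied constant depends only on $d$ after all applications of Lemma \ref{chi} and the absorption of the additive constants ($\log 5$, $25$, $+7$) in $R_\gamma$. A little extra care is needed in the small-$MW$ regime, where $\log(MW)$ may fail to dominate those constants; this is handled by treating $MW=O_d(1)$ as a separate boundary case under the standing hypothesis $\log(MW)>0$ inherited from Theorem \ref{main-theorem}.
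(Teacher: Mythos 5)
Your argument is, in substance, the same as the paper's own proof: the sandwich of Lemma \ref{lem1}, the one-dimensional counts of Proposition \ref{1d:prop} at the two thresholds $\epsilon$ and $\epsilon^{1/d}$, an elementary lifting of the additive one-dimensional error through the $d$-th power, and Lemma \ref{chi} to pass from $\chi(\epsilon^{1/d})$ to $\chi(\epsilon)$ to $\log(1/\epsilon)$. Your factorization $x^d-y^d=(x-y)\sum_{j=0}^{d-1}x^jy^{\,d-1-j}$ combined with $\max(a,\mu)\lesssim\max(\mu,R)$ is a repackaging of the paper's binomial-theorem bound on $\tau_\epsilon(MW)$ followed by $\sum_{j=0}^{d-1}x^j\le d\max\{1,x^{d-1}\}$; both routes give $|a^d-\mu^d|\lesssim_d R\,\max\{\mu^{d-1},R^{d-1}\}$, and on the range $\epsilon\in(0,1/2)$ your proof is correct.

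The gap is the upper half of the stated range. The lemma asserts the bound for every $\epsilon\in(0,1)$, but your chain invokes Lemma \ref{chi}(i), $\chi(\epsilon)\le 2\log(1/\epsilon)$, which is valid only for $\epsilon\in(0,1/2)$: for $\epsilon\in(1/2,1)$ one has $\chi(\epsilon)\ge\log 4$ while $\log(1/\epsilon)<\log 2$ (and $\log(1/\epsilon)\to 0$ as $\epsilon\to 1$), so your assertion that $R_{\epsilon}(MW)\lesssim_d\log(MW)\log(1/\epsilon)$ fails there, and with it your final bound. The paper dispatches $\epsilon\in(1/2,1)$ by a separate short argument, namely $\mathscr{m}_\epsilon(\mathbf M,\mathbf K)\le\mathscr{m}_{1-\epsilon}(\mathbf M,\mathbf K)$ together with the symmetry $\chi(\epsilon)=\chi(1-\epsilon)$ applied to its intermediate estimate in terms of $\chi$; you would need to add something of this kind, since the case cannot be recovered from your estimates as written. (This endpoint is genuinely delicate rather than cosmetic: all eigenvalues are $<1$, so $\mathscr{m}_\epsilon=0$ once $\epsilon$ exceeds the top eigenvalue, while the right-hand side of \eqref{M-2MW} tends to $0$ as $\epsilon\to 1$; the natural estimate on all of $(0,1)$ is the one with $\chi(\epsilon)$ in place of $\log(1/\epsilon)$.) Your remaining caveat --- that $\log(MW)$ must dominate the additive constants in $R_\gamma$, i.e., $MW$ bounded away from $1$ --- is real, but it is equally implicit in the paper's step ``$R_\epsilon(MW)\lesssim\log(MW)\,\chi(\epsilon)$,'' so it does not distinguish your argument from the paper's.
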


 \begin{proof}
By Proposition \ref{1d:prop},  for any $\epsilon\in (0,1)$
\[
\begin{aligned}
 2MW - C  R_\epsilon(MW) &\leq  \# \{ \ell : \lambda_\ell > \epsilon \}  
&\leq  2MW + C  R_\epsilon(MW), 
\end{aligned}
\] 
where $C$ is a constant independent of $\epsilon$.

Raising both sides to the \(d\)-th power and applying the binomial theorem, we obtain

\begin{align}\label{ineq:1}
 (2MW)^d - \tau_\epsilon(MW) &\leq  \left(\# \{ \ell : \lambda_\ell > \epsilon \}\right)^d 
\leq  (2MW)^d +    \tau_\epsilon(MW),  
\end{align}
 
where 
\[\tau_\epsilon(MW):=\sum_{j=0}^{d-1}
      %(-1)^{\,j}\,
      \binom{d}{\,j+1}\,
      (2MW)^{\,d-j-1}\,
      \bigl(C\,R_\epsilon(MW)\bigr)^{\,j+1}\] 
Note $\binom{d}{j+1} \leq 2^d$, we have 
\! 
\[\tau_\epsilon(MW) \lesssim 2^d \sum_{j=0}^{d-1}
      (2MW)^{\,d-j-1}\,
 R_\epsilon(MW)^{\,j+1}.\] 
The constant in the inequality above depends on 
 $d$.  

Take $A:=2MW$ and $B=R_\epsilon(MW)$. Then 

\[\tau_\epsilon(MW) 
\lesssim  2^d A^{d-1} B \sum_{j=0}^{d-1} \Bigl(\frac{B}{A}\Bigr)^j. 
%=  2^d A^{d-1} B \cfrac{1-(B/A)^d}{1-B/A}, \, B\neq A
\]

\begin{comment}
Because $0<\cfrac{1-(B/A)^d}{1-B/A} \leq d (B/A)^{d-1}$ when $B>A$, we have 

\[\tau(N,M,\epsilon)
\lesssim 2^d A^{d-1} B \sum_{j=0}^{d-1} \Bigl(\frac{B}{A}\Bigr)^j =   d (2B)^d
\]

When $B<A$, we have $0<\cfrac{1-(B/A)^d}{1-B/A} \leq d$. 
\end{comment}

Since \(\sum_{j=0}^{d-1}x^j \le d\,\max\{1,x^{d-1}\}\) for any \(x>0\),
applying this bound yields 
\[\tau_\epsilon(MW) \lesssim 
   2^d A^{d-1} B \sum_{j=0}^{d-1} \Bigl(\frac{B}{A}\Bigr)^j \leq  (d 2^d) B \max\{ B^{d-1} ,  A^{d-1}\}  
\]

 By substituting the expressions for $A$ and $B$, defining
\[
\chi(\epsilon) := \log\!\bigl(\tfrac{1}{\epsilon(1-\epsilon)}\bigr),
\]
and noting that 
\[
R_\epsilon(MW)\;\lesssim\;\log(MW)\,\chi(\epsilon),
\]
we arrive at:
\begin{align}\label{ineq:tau}
\tau_\epsilon(MW)
\;\lesssim\;
(d\,2^d)\,\log(MW)\,\chi(\epsilon)\,
\max\{\, (\log(MW)\chi(\epsilon))^{\,d-1},\,(2MW)^{\,d-1}\}, 
\end{align}

where the constant in the inequality depends only on the dimension $d$.

Using the estimation  in \eqref{ineq:1}, we obtain 
\begin{align}\label{ineq:d}
\Bigl| \left(\# \{ r : \lambda_N^{(r)}> \epsilon \}\right)^d - (2MW)^d\Bigr| \lesssim_d   \log(MW)\,\chi(\epsilon)\,
\max\{\, (\log(MW)\chi(\epsilon))^{\,d-1},\,(2MW)^{\,d-1}\}
 \end{align}
where the constant depends on dimension $d$. 
We apply the previous inequality with $\epsilon$ replaced by $\epsilon^{1/d}$ to obtain

\begin{align}\label{ineq:1/d}
\Bigl| \left(\# \{r : \lambda_N^{(r)} > \epsilon^{1/d} \}\right)^d - (2MW)^d\Bigr| \lesssim_d  \log(MW)\,\chi(\epsilon^{1/d})\,
\max\{\, (\log(MW)\chi(\epsilon^{1/d}))^{\,d-1},\,(2MW)^{\,d-1}\}
\end{align}
 
Assume $\epsilon\in (0, 1/2)$. 
 By Lemma \ref{chi} (ii)  we have 
  $\chi(\epsilon^{1/d}) \lesssim  \chi(\epsilon)$ for  $\epsilon\in (0,1/2)$. Using this   in the   inequality \eqref{ineq:1/d},  and a combination of this with \eqref{ineq:d} and the result of Lemma \ref{lem1}
  yields, 
\! 
\begin{align}\label{ineq:M-2MW}
\Bigl|  \mathscr{m}_\epsilon({\bf M}, {\bf K}) - (2MW)^d\Bigr| \lesssim_d  \log(MW)\,\chi(\epsilon)\,
\max\{\, (\log(MW)\chi(\epsilon))^{\,d-1},\,(2MW)^{\,d-1}\}.  
\end{align} 
Now we use the inequality $\chi(\epsilon) \lesssim \log(1/\epsilon)$ (see Lemma \ref{chi} (i)) to complete the proof for $\epsilon\in (0,1/2)$. 

\medskip 

Now assume that $1/2<\epsilon<1$. By definition, we have 

$$\mathscr{m}_\epsilon({\bf M}, {\bf K})  \leq \mathscr{m}_{1-\epsilon} ({\bf M}, {\bf K})$$  

The desired result then follows from  the symmetry property of $\chi(\xi)$. 
\end{proof}

% \begin{lemma}\label{lem3} Prove $N_\epsilon(Q,S)$ is less than $\max\{..\}$.. in the paper. 

% $$ 
% N_\epsilon^{\text{disc}} ({\bf M}, {\bf K}) \lesssim 
% $$
% \end{lemma}

\section{Proof of main theorem}\label{proof-of-main-theorem} 
The estimation \eqref{M-2MW1} of the main Theorem \ref{main-theorem}   was proven above in Lemma  \ref{lem2}. 
Here, we prove the estimation \eqref{N} of the main theorem. 

Let $\epsilon\in(0,\tfrac12)$, and recall  that
\[
\mathscr{n}_\epsilon(\mathbf M,\mathbf K)
\;=\;
\#\{\,r : \lambda_{\mathbf N}^{(r)} \in(\epsilon,\,1-\epsilon)\}.
\]
Since
\[ 
\{\lambda_{\mathbf N}^{(r)} \in(\epsilon,\,1-\epsilon)\} = 
\{\lambda_{\mathbf N}^{(r)} >\epsilon,\;\lambda_{\mathbf N}^{(r)} <1\}
\;\setminus\;
\{\lambda_{\mathbf N}^{(r)} >1-\epsilon\}, 
\]
we get
\[
\mathscr{n}_\epsilon(\mathbf M,\mathbf K)
\;\le\;
\mathscr{m}_\epsilon(\mathbf M,\mathbf K)
\;-\;
\mathscr{m}_{1-\epsilon}(\mathbf M,\mathbf K).
\]
Therefore
\[
\begin{aligned}
\mathscr{n}_\epsilon(\mathbf M,\mathbf K)
&\;\le\;
\Bigl|\,\mathscr{m}_\epsilon(\mathbf M,\mathbf K) - (2MW)^d\Bigr|
\;+\;
\Bigl|\,\mathscr{m}_{1-\epsilon}(\mathbf M,\mathbf K) - (2MW)^d\Bigr|.
\end{aligned}
\]
Applying Lemma \ref{lem2} to both terms on the right-hand side, and using the fact that
\(\log\!\bigl(1/(1-\epsilon)\bigr) \lesssim \log\!\bigl(1/\epsilon\bigr)\) for \(\epsilon\in(0,\tfrac12)\), we obtain
\[
\mathscr{n}_\epsilon(\mathbf M,\mathbf K)
\;\lesssim_d\;
\log(MW)\,\log\!\bigl(\tfrac1\epsilon\bigr)\,
\max\Bigl\{\bigl(\log(MW)\,\log\tfrac1\epsilon\bigr)^{\,d-1},\;(2MW)^{\,d-1}\Bigr\}.
\]
Hence Lemma \ref{lem2} finishes the proof of the main theorem.

\medskip

\printbibliography

 \clearpage

\appendix

\section{Figures}\label{appendix:figures}

\begin{figure}[ht]
\centering

\includegraphics[width=1\linewidth]{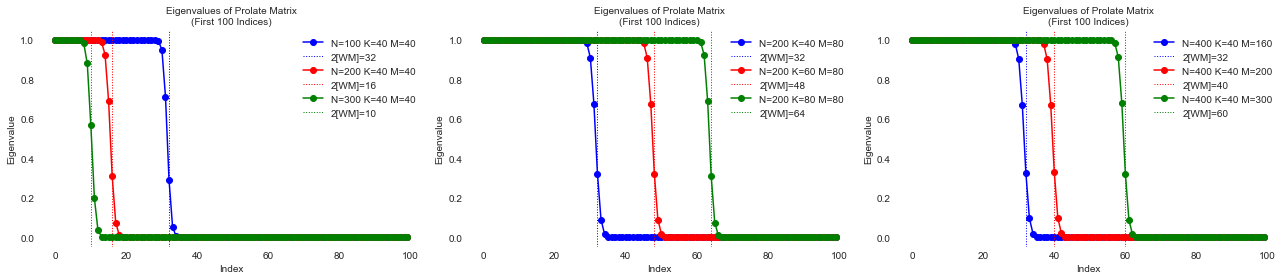}
\caption{\small Influence of time-bandwidth bound- Eigenvalue spectra of time--frequency limiting (prolate) matrices for increasing ambient dimension \( N \), while keeping the time-bandwidth product \( 2MW \) fixed. Each subplot shows the first 100 eigenvalues of the prolate matrix \( A = T_M B_K T_M \), where \( B_K \) is the frequency-limiting operator to bandwidth \( K \), and \( T_M \) is the time-limiting operator to \( M \) samples. Despite increasing \( N \), the number of significant ($\epsilon$-close to 1) eigenvalues remains approximately constant, consistent with the theoretical time-bandwidth bound \( 2MW \). This illustrates that the effective dimensionality of the bandlimited and time-concentrated subspace is controlled primarily by the product \( 2MW \), not the ambient dimension \( N \).
}
\label{figure:prolate9}
\end{figure}

\begin{figure}[ht]
\centering

\includegraphics[width=1\linewidth]{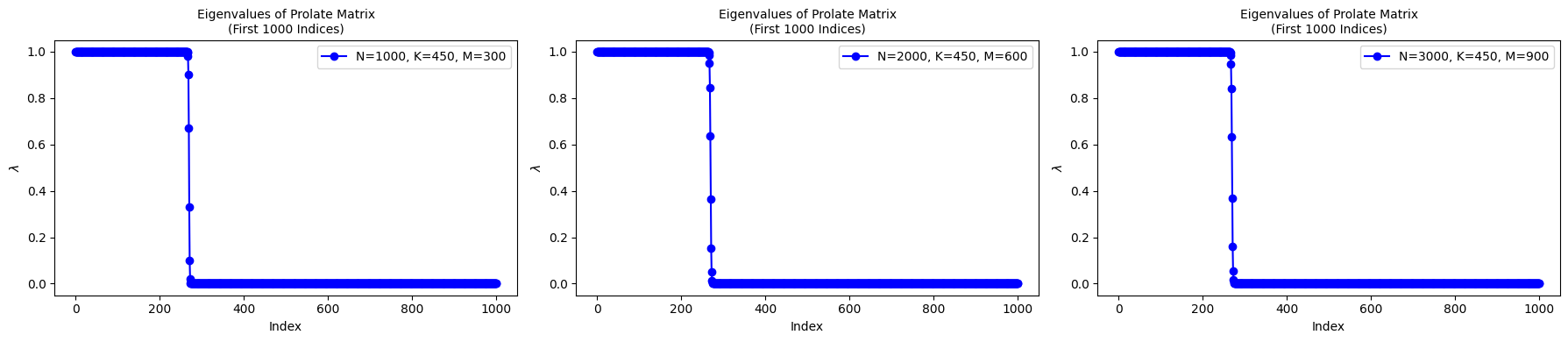}\caption{\small In all these graphs, the time-bandwidth is $2MW \approx 270$. From left to right, the plots correspond to $W \approx 0.45$, $W \approx 0.22$, and $W \approx 0.15$.  
This  shows that the eigenvalue distribution depends primarily on the product $MW$, and remains consistent across different $N, M$ and $W$ configurations as long as   $MW$ held constant.}\label{figure:prolate4-6}
\end{figure}

\begin{figure}[ht]
\centering

\includegraphics[width=1\linewidth]{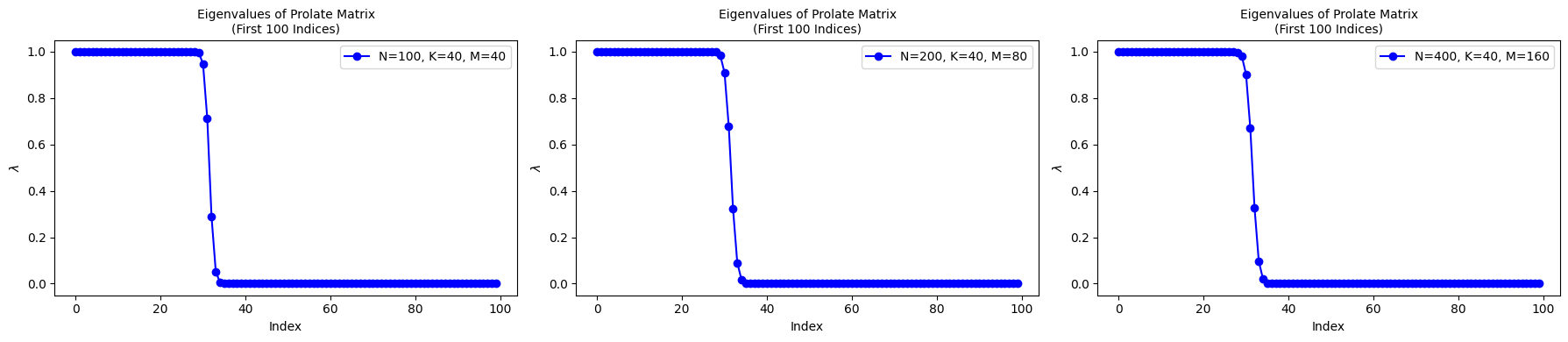}
\caption{Eigenvalue distributions of prolate matrices \( A = T_M B_K T_M \) for increasing ambient dimension \( N \), and parameters $K$ and $M$,  while keeping the time-bandwidth product $2MW$ fixed. Each subplot shows the first $100$ eigenvalues. As \( N \) increases (from left to right), the shape and decay of the eigenvalues remain essentially unchanged, indicating that the spectrum of the time–frequency limiting operator depends primarily on the time–bandwidth product \( 2MW \), and not on the total timewidth \( M \).
 }
\label{figure:prolate1}
\end{figure}

% \begin{figure}[ht]
% \centering
% \includegraphics[scale=0.35]{}
% \caption{\small  Eigenvalues of Prolate matrix for given $N$, $M$ and $K$ with time-bandwidth  
% $ 2MW   \approx  270 $. From left to right, the plots correspond to $W \approx 0.45$, $W \approx 0.22$, and $W \approx 0.15$.
%  }
% \label{figure:prolate6}
% \end{figure}

   \begin{figure}[ht]
\centering

\includegraphics[width=0.4\linewidth]{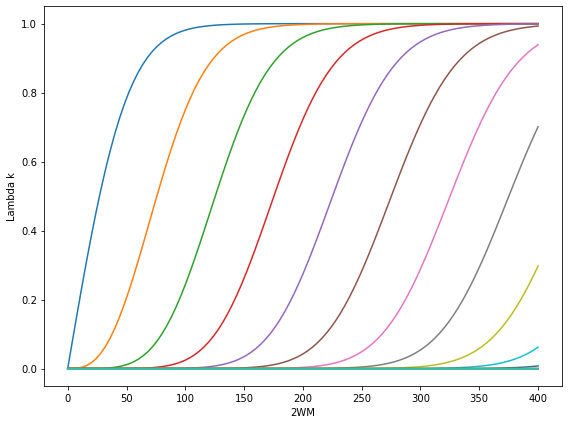}
\caption{\small This graph  shows the plot of eignvalues $\lambda_N^{(\ell)}$ as a function of $MW$ or $W$ for a fixed $M$. Here, we choose $N=1000$, $M=800$, and   allow  $W$ and $2MW$ to vary over the intervals $(0, 0.2)$ and $(0,400)$, respectively. The graphs from left to write show $\lambda_{1000}^{(0)}$, $\lambda_{1000}^{(1)}, \lambda_{1000}^{(2)}, \cdots , \lambda_{1000}^{(k)}, \cdots$. 
}
\label{figure:eigenvalues1}
\end{figure}

 \begin{figure}[ht]
\centering

\includegraphics[width=0.5\linewidth]{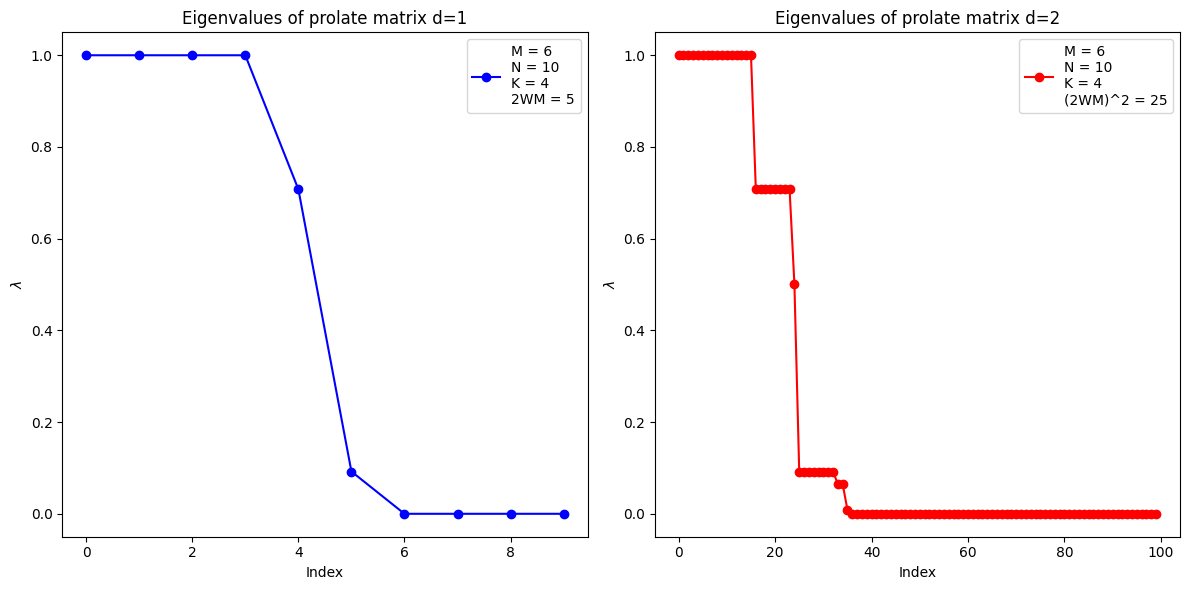}
\caption{\small  Eigenvalue distribution of a one-dimensional prolate matrix and its tensor product. The left plot shows the eigenvalues of a 1D prolate matrix \( A = T_M B_K T_M \), with time-bandwidth product of $2WM=5$. As shown, the leading   eigenvalues are close to $1$, while the remaining ones decay rapidly toward zero. The right plot shows the eigenvalues of the 2D tensor product operator \( A \otimes A \), corresponding to the Kronecker product of the 1D prolate matrix with itself,  with a time-bandwidth product of $25$.  The spectrum consists of all pairwise products of 1D eigenvalues. The multiplicity of each distinct eigenvalue is reflected by the vertical alignment of overlapping points (bold dots).
}
\label{figure:prolate7}
\end{figure}

  \clearpage  

\section{}\label{appendix:proof-isomorphism}

We prove that 
   \[
\mathbb{C}^{N_1 \times \cdots \times N_d} \cong \mathbb{C}^{N_1} \otimes \mathbb{C}^{N_2} \otimes \cdots \otimes \mathbb{C}^{N_d}.  
\]
 We provide the proof for $d=2$, as the argument extends naturally to any $d$.

 Consider the vector space \(\mathbb{C}^{N_1 \times N_2}\) of \(N_1 \times M_2\) matrices over \(\mathbb{C}\). This space has dimension \(N_1N_2\).

Let \(\{e_1, e_2, \ldots, e_{N_1}\}\) be the standard unit basis for \(\mathbb{C}^{N_1}\) and \(\{f_1, f_2, \ldots, f_{N_2}\}\) be the standard unit basis for \(\mathbb{C}^{N_2}\). The tensor product \(\mathbb{C}^{N_1} \otimes \mathbb{C}^{N_2}\) is defined as the vector space spanned by the elementary tensors
\[
\{ e_i \otimes f_j : 1 \leq i \leq N_1, \; 1 \leq j \leq N_2 \}.
\]
Since there are \(N_1\) choices for \(i\) and \(N_2\) choices for \(j\), the set \(\{ e_i \otimes f_j \}\) forms a basis for \(\mathbb{C}^{N_1} \otimes \mathbb{C}^{N_2}\) and the dimension of this tensor product is also \(N_1N_2\).

We can define an isomorphism between \(\mathbb{C}^{N_1 \times N_2}\) and \(\mathbb{C}^{N_1} \otimes \mathbb{C}^{N_2}\) by mapping the elementary tensor \(e_i \otimes f_j\) to the matrix unit \(E_{ij} \in \mathbb{C}^{N_1 \times N_2}\), where
\[
(E_{ij})_{kl} = \delta_{ik}\delta_{jl}.
\]
This map is linear and bijective, and it preserves the vector space structure. Thus, we have the identification:
\[
\mathbb{C}^{N_2 \times N_2} \cong \mathbb{C}^{N_1} \otimes \mathbb{C}^{N_2}.
\]

 \begin{comment}
 
\subsection{$B_KT_MB_K$}
Below we calculate $B_KT_MB_K$. Recall that $B_K(x) = x\ast F^{-1}(1_K)$. 

\begin{lemma} Consider $\widetilde {A}:= B_KT_M B_K$. Then the entries of the matrix is given by 
 \[
\widetilde{A}_{n,k} \;=\; \frac{1}{N^2}\sum_{j=0}^{M-1} 
\frac{\sin\Bigl(\frac{(2K+1)\pi\,(n-j)}{N}\Bigr)}
     {\sin\Bigl(\frac{\pi\,(n-j)}{N}\Bigr)}
\cdot
\frac{\sin\Bigl(\frac{(2K+1)\pi\,(j-k)}{N}\Bigr)}
     {\sin\Bigl(\frac{\pi\,(j-k)}{N}\Bigr)} =  \frac{1}{N^2}\sum_{j=0}^{M-1}  D_W(n-j) D_W(j-k)
\]
\end{lemma}

\begin{proof}
Note that 
$$ \widetilde{A}_{n,k} = \sum_{j=1}^M \phi(n-j) \phi(j-k)
$$ 
where \[
\phi(n) \;=\; \frac{1}{N} \sum_{\omega=-K}^{K} e^{\frac{2\pi j \omega n}{N}}.
\]
Then for $n\neq 0$, 
\[
\phi(n) \;=\; \frac{1}{N} \frac{\sin\left(\frac{(2K+1)\pi n}{N}\right)}{\sin\left(\frac{\pi n}{N}\right)},
\]

and $\phi(0) \;=\; \frac{2K+1}{N}.
$ 
\end{proof}
\end{comment}

 \end{document}